\newtheorem{theorem}{Theorem}[section] \newtheorem{%
definition}[theorem]{Definition}
\newtheorem{lemma}[theorem]{Lemma}
\newtheorem{corollary}[theorem]{Corollary} \newtheorem{remark}[theorem]{%
Remark}  %
\numberwithin{equation}{section}
\newcommand\RR{\ensuremath{\mathbb{R}}}
\newcommand\ZZ{\ensuremath{\mathbb{Z}}}
\newcommand\PP{\ensuremath{\mathbb{P}}}
\begin{document}
\begin{center}

\textbf{\Large Asymptotic  Dynamics of a Class of Coupled  Oscillators}

 \vskip 0.2cm

\textbf{\Large  Driven by White Noises}

\vskip 1cm

{\large Wenxian Shen$^{a}$ \footnote{The first author is partially
supported by NSF grant DMS-0907752}}, Zhongwei Shen$^{a}$, Shengfan
Zhou$^{b}$ \footnote{The third author is supported by National
Natural Science Foundation of China under Grant 10771139, and the
Innovation Program of Shanghai Municipal Education Commission under
Grant 08ZZ70}

\vskip 0.3cm

$^{a}$\textit{Department of Mathematics and Statistics, Auburn
University},

\textit{Auburn 36849, USA}

$^{b}$\textit{Department of Applied Mathematics, Shanghai Normal
University},

\textit{Shanghai 200234, PR China}

\vskip 1cm

\begin{minipage}[c]{13cm}

\noindent \textbf{Abstract}: This paper is devoted to the study of
the asymptotic dynamics of a class of coupled  second order
oscillators driven by white noises. It is shown that  any system of
such coupled oscillators with positive damping and coupling
coefficients  possesses a global random attractor. Moreover, when
the damping and the coupling coefficients are sufficiently large,
the global random attractor is a one-dimensional random horizontal
curve regardless of the strength of the noises, and the system has a
rotation number, which implies that the oscillators in the system
tend to oscillate with the same frequency eventually and therefore
the so called frequency locking is successful. The results obtained
in this paper generalize many existing results on the asymptotic
dynamics for a single second order noisy oscillator to systems of
coupled second order noisy oscillators. They show that coupled
damped second order oscillators with large damping have similar
asymptotic dynamics as the limiting coupled first order oscillators
as the damping goes to infinite and also that coupled damped second
order oscillators have similar asymptotic dynamics as their  proper
space continuous counterparts, which are of great practical
importance.

\vspace{5pt}

\noindent\textbf{Keywords}: Coupled second order oscillators; white
noises; random attractor; random horizontal curve;
rotation number; frequency locking

\vspace{5pt}

\noindent\textbf{AMS Subject Classification}: 60H10, 34F05, 37H10.

\end{minipage}
\end{center}

\section{Introduction}

 This paper is devoted to the study of
the asymptotic dynamics of the following system of second order oscillators driven by additive
noises:
\begin{equation}\label{main-eq}
d\dot{u}_j+\alpha du_j+K(Au)_{j}dt+\beta
g(u_j)dt=f_jdt+\epsilon_jdW_{j},
\end{equation}
where  $j\in \mathbb{Z}^{d}_N:=\{j=(j_1,\dots,j_d)\in\mathbb{Z}^{d}:1\leq
j_1,\dots,j_d\leq N\}$,  $u_j$ is a scalar unknown function of $t$ and $u=(u_1,u_2,\cdots,u_{N^d})^\top$, $\alpha$ and $K$ are positive constants, $A$ is an $N^d\times N^d$ matrix and
$(Au)_j$ stands for the $j$th
component of the vector $Au$,  $\beta\in\mathbb{R}$, $g$ is  a periodic function,
$f_j$ and $\epsilon_j$ are constants,
and $\{W_j(t)\}_{j\in\mathbb{Z}^{d}_{N}}$ are
independent two-sided real-valued Wiener processes. Moreover,  $A$ and $g$ satisfy

\medskip
\noindent{\bf(HA)} {\it
 $A$ is an
$N^d\times N^d$ nonnegative definite symmetric matrix with
eigenvalues denoted by $\lambda_i$, $i=0,1,\dots,N^d-1$ satisfying that
\begin{equation*}
0=\lambda_0<\lambda_1\leq\cdots\leq\lambda_{N^d-1},
\end{equation*}
$\lambda_0$ is algebraically simple,
and $(1,\dots,1)^{\top}\in\mathbb{R}^{N^d}$ is an
eigenvector corresponding to $\lambda_0=0$.
}

\medskip

\noindent{\bf (HG)} {\it $g\in
C^1(\mathbb{R},\mathbb{R})$ has the following properties
\begin{equation*}
g(x+\kappa)=g(x),\quad|g(x)|\leq c_1,\quad|g'(x)|\leq
c_2,\,\,\forall x\in\mathbb{R},
\end{equation*}
where $c_1>0$, $c_2>0$ and $\kappa>0$ is the smallest positive
period of $g$.}

System \eqref{main-eq} appears in many applied problems including Josephson junction arrays
and coupled pendula (see \cite{HaBeWi}, \cite{Lev}, \cite{QZQ},
\cite{WiHa}, etc.).
Physically, $\alpha$ in \eqref{main-eq} represents the damping of
the system and $K$ is the coupling coefficient of the system. \eqref{main-eq} then represents a  system of
$N^d$ coupled damped oscillators independently driven by white
noises.

System \eqref{main-eq} also arises from various spatial
discretizations of certain damped hyperbolic partial differential
equations. For example, the $N^d\times N^d$ matrix $A$ in
\eqref{main-eq} includes the discretization of negative Laplace
operator $-\Delta$ with Neumann or periodic boundary conditions
defined as follows:
\begin{equation*}
\begin{split}
(Au)_j=(Au)_{(j_1,j_2,\ldots,j_d)}&=\frac{1}{h^2}\big[2du_j-u_{(j_1+1,j_2,\ldots,j_d)}-u_{(j_1,j_2+1,\ldots,j_d)}-\cdots
-u_{(j_1,j_2,\ldots,j_d+1)}\\
&\quad-u_{(j_1-1,j_2,\ldots,j_d)}-u_{(j_1,j_2-1,\ldots,j_d)}-\cdots-u_{(j_1,j_2,\ldots,j_d-1)}\big],
\end{split}
\end{equation*}
with Neumann boundary condition
\begin{equation*}
\begin{split}
u_{(j_1,\dots,j_{i-1},0,j_{i+1},\dots,j_{d})}&=u_{(j_1,\dots,j_{i-1},1,j_{i+1},\dots,j_{d})},\\
u_{(j_1,\dots,j_{i-1},N+1,j_{i+1},\dots,j_{d})}&=u_{(j_1,\dots,j_{i-1},N,j_{i+1},\dots,j_{d})}
\end{split}
\end{equation*}
or periodic boundary condition
\begin{equation*}
\begin{split}
u_{(j_1,\dots,j_{i-1},0,j_{i+1},\dots,j_{d})}&=u_{(j_1,\dots,j_{i-1},N,j_{i+1},\dots,j_{d})},\\
u_{(j_1,\dots,j_{i-1},N+1,j_{i+1},\dots,j_{d})}&=u_{(j_1,\dots,j_{i-1},1,j_{i+1},\dots,j_{d})}
\end{split}
\end{equation*}
for $j=(j_1,\dots,j_d)\in\mathbb{Z}^{d}_{N}$ and $i=1,\dots,d$.
Thus, \eqref{main-eq} with  $u_j=u(j_1 h,\cdots,j_i h,\cdots,j_d h)$ ($h=L/N$),  $A$ being as above,
$f_j=f$,  $\epsilon_j=\epsilon$, and $W_j=W$
is a spatial discretization of the following problem
\begin{equation}\label{sine-gordon-eq}
d\dot{u}+\alpha d{u}-K\Delta u dt+\beta g(u) dt=f dt+\epsilon dW,\quad\text{in}\,\,
U\times\mathbb{R}^{+}
\end{equation}
with Neumann boundary condition or periodic boundary condition,
i.e.,
\begin{equation*}
\frac{\partial u}{\partial n}=0\quad\text{on}\,\,\partial
U\times\mathbb{R}^{+}
\end{equation*}
or
\begin{equation*}
u|_{\Gamma_j}=u|_{\Gamma_{j+d}},\quad \frac{\partial u}{\partial
x_j}\Big|_{\Gamma_j}=\frac{\partial u}{\partial
x_j}\Big|_{\Gamma_{j+d}},\quad j=1,\dots,d,
\end{equation*}
where $\Gamma_j=\partial U\cap\{x_j=0\}$, $\Gamma_{j+d}=\partial
U\cap\{x_j=L\}$, $j=1,\dots,d$ and $U=\prod_{i=1}^{d}(0,L)$. Note
that if $g(u)=\sin u$, \eqref{sine-gordon-eq} is the so called
damped sine-Gordon equation, which is used to model, for instance,
the dynamics of a continuous family of junctions (see \cite{Tem}).

Two  of the main dynamical aspects about coupled oscillators and damped wave equations considered
in the literature are the existence and structure of global attractors and the
phenomenon of frequency locking.
A large amount of research has been carried out toward these two aspects for
a variety of systems related to \eqref{main-eq}. See for example,
\cite{QSZ1,QSZ2, QQZ, QZQ, Z} for the study  of coupled oscillators
with constant or periodic external forces;
\cite{Hal, QQWZ, QZZ, Tem,  WaZh1} for the study  of the deterministic damped sine-Gordon equation;
\cite{CSZ,  QW, S} for the study of coupled oscillators driven by white noises;
 and \cite{Fan, SZS, ZhYiOu} for the study of stochastic damped sine-Gordon equation.
 Many of the existing works focus on the existence of global attractors and the estimate
 of the dimension of the global attractors. In \cite{CSZ, S, SZS},  the existence and structure of random attractors
 of stochastic oscillators and stochastic damped wave equations
 are studied.
 In particular,
 the asymptotic dynamics of a single second order noisy
oscillator, i.e., \eqref{main-eq} with $N=1$, is studied in \cite{S}.
The author of \cite{S} proved the existence of a random attractor which is a family of horizontal
curves and the existence of a rotation number which implies the
frequency locking.
 In \cite{CSZ}, the authors considered a class of
coupled first order oscillators driven by white noises.  Among
those, the existence of a one-dimensional random attractor and the
existence of a rotation number are  proved in \cite{CSZ}.
The system of coupled first order oscillators considered in \cite{CSZ}
is of the form
\begin{equation}
\label{first-order}
du_j+K(Au)_jdt+\beta g(u_j)dt=f_jdt+\epsilon d W_j,\quad j\in \ZZ_N^d.
\end{equation}
Note that,  by resealing the time variable by $t\to \frac{t}{\alpha}$,
\eqref{main-eq} becomes
\begin{equation}
\frac{1}{\alpha}d\dot{u}_j+du_j+K(Au)_jdt+\beta
g(u_j)dt=f_jdt+\epsilon d W_j,\quad j\in \ZZ_N^d.
\end{equation}
Hence, \eqref{first-order}
can be formally viewed as the limiting system of \eqref{main-eq} as the damping coefficient $\alpha$
goes to infinite. In \cite{SZS}, the authors investigated the
existence and structure of random attractors  of damped sine-Gordon equations of the form \eqref{sine-gordon-eq}
with Neumann boundary condition, which is a space continuous counterpart of \eqref{main-eq}
as mentioned above.

However, many important dynamical aspects including the existence of global attractor  and
the occurrence of frequency locking have been hardly studied for coupled second order oscillators of the form \eqref{main-eq}
driven by
white noises.
It is of great interest to investigate the extent to which the
existing  results on asymptotic dynamics of a single  second order noisy
oscillator may be generalized to systems of coupled  second order noisy oscillators.
Thanks to the relations  between \eqref{main-eq} and \eqref{first-order} and between \eqref{main-eq} and \eqref{sine-gordon-eq},
 it is also of great interest to explore
the similarity and difference between the dynamics of coupled  damped  second order oscillators and  its limiting coupled first order
oscillators as the damping coefficient goes to infinite and between the dynamics of coupled damped  second order oscillators and
 their proper space continuous counterparts.
The objective of this paper is to
 carry out a study along this line. In particular,  we study the asymptotic or global dynamics of \eqref{main-eq},
including  the existence and structure of global attractor in proper phase space and the success of frequency locking.

In order to do so,
as usual, we first change \eqref{main-eq} to some  system of coupled first order random equations.
Assume $N\geq2$ and $d\geq1$ ($N=1$ reduces to the single noisy oscillator case considered in \cite{S}).
Let $u=(u_j)_{j\in\mathbb{Z}_N^d}$,
$g(u)=(g(u_j))_{j\in\mathbb{Z}_N^d}$,
$f=(f_j)_{j\in\mathbb{Z}_N^d}$,
$W(t)=(\epsilon_jW_j(t))_{j\in\mathbb{Z}_N^d}$. Then,
\eqref{main-eq} can be written as the following matrix form,
\begin{equation}\label{SDE}d\dot{u}+\alpha
du+KAudt+\beta g(u)dt=fdt+dW(t).
\end{equation}
Let
\begin{equation*}
\Omega_j=\Omega_0 =\{\omega_0\in
{C}(\mathbb{R},\mathbb{R}):\omega_0(0)=0\}
\end{equation*}
equipped with the compact open topology,
$\mathcal{F}_j=\mathcal{B}(\Omega_0)$ be the Borel $\sigma$-algebra
of $\Omega_0$ and $\PP_j$ be the corresponding Wiener measure for
$j\in\mathbb{Z}^{d}_{N}$. Let
$\Omega=\prod_{j\in\mathbb{Z}^{d}_{N}}\Omega_j$, $\mathcal{F}$ be
the product $\sigma $-algebra on $\Omega$ and $\PP$ be the induced
product Wiener measure. Define $(\theta _t)_{t\in\mathbb{R}}$ on
$\Omega$ via
\begin{equation*}
\theta _t\omega (\cdot )=\omega (\cdot +t)-\omega (t),\quad
t\in\mathbb{R}.
\end{equation*}
Then, $(\Omega,\mathcal{F},\PP,(\theta_t)_{t\in\mathbb{R}})$ is an
ergodic metric dynamical system (see \cite{A}).
Consider the Ornstein-Uhlenbeck equation,
\begin{equation}
\label{OU-eq}
dz+zdt=dW(t),\quad z\in\RR^{N^d}.
\end{equation}
Let $z(\theta_t\omega)=(z_j(\theta_t\omega))_{j\in\mathbb{Z}_N^d}$
be the unique stationary solution of \eqref{OU-eq} (see \cite{A,BLW, DLS}
for the existence and various properties of $z(\cdot)$). Let
$v=\dot{u}-z(\theta_t\omega)$. We obtain the following  equivalent system of
\eqref{SDE},
\begin{equation} \label{RDE}
\left\{ \begin{aligned}
&\dot{u}=v+z(\theta_t\omega),\\
&\dot{v}=-KAu-\alpha v+f-\beta g(u)+(1-\alpha)z(\theta_t\omega).
\end{aligned} \right.
\end{equation}

To study the global dynamics of \eqref{main-eq}, it is therefore equivalent to
study the global dynamics of \eqref{RDE}. Observe that the natural phase space
for \eqref{RDE} is $E:=\RR^{N^d}\times\RR^{N^d}$ with the standard
Euclidean norm.  Thanks to the presence of the damping,
it is expected that \eqref{RDE} possesses a global attractor in certain sense. However,
due to
the uncontrolled component of the solutions along the direction of
the eigenvectors of the linear operator in the right of \eqref{RDE} corresponding to the zero eigenvalue,
 there is no bounded attracting sets in $E$ with
 the standard
Euclidean norm, which will lead to nontrivial dynamics. There is
also some additional difficulty if one  studies  \eqref{RDE} in $E$
with the standard Euclidean norm due to the zero limit of some
eigenvalues of the linear operator in the right of \eqref{RDE} as
$\alpha\to\infty$. The later difficulty does not appear for coupled
first order oscillators studied in \cite{CSZ} and for a single noisy
oscillator considered in \cite{S}. We will overcome the difficulty
by using some equivalent norm on $E$ and considering \eqref{RDE} in
some proper quotient space of $E$  and prove the existence of a
global random attractor as well as the existence of a rotation
number of \eqref{RDE}.

To be more precise,  let
\begin{equation}
\label{c-eq}
C=\begin{pmatrix}0&I\\-KA&-\alpha
I\end{pmatrix}.
\end{equation}
 By simple matrix analysis,
the eigenvalues of $C$ are given by (see \cite{LZ,QQZ} for
example)
\begin{equation}\label{eigenvalueC}
\mu_{i}^{\pm}=\frac{-\alpha\pm\sqrt{\alpha^2-4K\lambda_i}}{2},\quad
i=0,1,\dots,N^d-1.
\end{equation}
Note that $\mu_0^+=0$, which requires some special consideration for
the solutions along the direction of the eigenvector
$\eta_0=(1,\dots,1,0,\dots,0)^{\top}$ corresponding to $\mu_0^+$. We
 overcome this difficulty by considering \eqref{RDE} in the
cylindrical space $E_{1}/{\kappa \eta_0\mathbb{Z}}\times E_{2}$,
where $E_{1}=\text{span}\{\eta_0 \}$, $E_{2}$ is the space spanned by all
the eigenvectors  corresponding to non-zero eigenvalues of $C$ (see
section 4 for details). We then  prove

\medskip
\noindent (1) {\it  For any  $\alpha>0$ and $K>0$,  system
\eqref{main-eq} possesses a global random attractor (which is unbounded along the one-dimensional space $E_1$ and bounded along
the one-codimensional space
 $E_2$) (see Theorem
\ref{existence-random-attractor2}, Corollary
\ref{existence-random-attractor-corollary} and Remark 4.4). }

\medskip

 It is expected physically that when the damping coefficient $\alpha\to \infty$, the dynamics of \eqref{RDE}
becomes simpler or the structure of the global attractor of
\eqref{RDE} becomes simpler. However, $\mu_i^+\to 0$ as $\alpha\to
\infty$ for $i=1,2,\cdots,N^d-1$, which gives  rise to some
difficulty for studying the structure of the global attractor in $E$
with the standard Euclidean norm. We  introduce an equivalent norm
on $E$ to overcome this difficulty (see section 3 for the
introduction of the equivalent norm, the choice of such equivalent norm was first discovered in \cite{Mor}) and prove

\medskip
\noindent (2) {\it When $\alpha$ and $K$ are sufficiently large, the global  random attractor of
\eqref{main-eq} is a one-dimensional random horizontal curve (see
Theorem \ref{one-dimension-thm} and Corollary
\ref{one-dimension-cor}), and the rotation number (see Definition
\ref{definition-rotation-number}) of \eqref{main-eq} exists (see
Theorem \ref{existence-rotation-number1} and Corollary
\ref{existence-rotation-number1-corollary}).
}

\medskip

Note that roughly  a real number $\rho\in\RR$ is called the {\it rotation number} of \eqref{main-eq} or \eqref{RDE} if
 for any solution $\{u_j(t)\}_{j\in \ZZ_N^d}$
of \eqref{main-eq}, the limit
$\lim_{t\to\infty}\frac{u_j(t)}{t}$ exists almost surely for any $j\in\ZZ_N^d$  and
\begin{equation*}
\lim_{t\to\infty}\frac{u_j(t)}{t}=\rho \quad {\rm for}\quad a.e. \quad \omega\in\Omega\quad {\rm and}\quad j=1,2,\cdots,N^d
\end{equation*}
(see  Definition \ref{definition-rotation-number} and the remark after Definition \ref{definition-rotation-number}). Hence if \eqref{main-eq} has a rotation number, then
 the oscillators in the system tend to oscillate with the
same frequency eventually and therefore the so called frequency locking is
successful.

(1) and (2) above are the main results of the paper. They make an
important contribution to the understanding of  coupled
second order oscillators  driven by noises. Property (1) shows that system
\eqref{main-eq} is dissipative along the one-codimensional space $E_2$. By property (2), the asymptotic
dynamics of \eqref{main-eq} with sufficiently large $\alpha$ and $K$
is one dimensional regardless of the strength of noise. Property (2) also shows that all the solutions of
\eqref{main-eq} tend to oscillate with the same frequency eventually
almost surely and hence frequency locking is successful in
\eqref{main-eq} provided that $\alpha$ and $K$ are sufficiently
large.

The results obtained in this paper  generalize  many existing results on the asymptotic dynamics for a single damped noisy oscillator to
systems of coupled damped noisy oscillators. They show that coupled damped second order oscillators with large damping
have similar asymptotic dynamics as the limiting coupled first order oscillators as the damping goes to infinite
and hence one may use coupled first order oscillators to analyze qualitative properties of
coupled second order oscillators with large damping, which is of great practical importance.
They also show that coupled damped second order oscillators have similar asymptotic dynamics as their  proper space
continuous counterparts and hence one may use finitely many coupled oscillators to study qualitative properties of
 damped wave equations, which is of great practical importance too.

The rest of the paper is organized as follows. In section 2, we
present  some basic concepts and properties for general random
dynamical systems. In section 3, we provide some basic settings
about \eqref{main-eq} and show that it generates a random dynamical
system. We prove in section 4 the existence of a global random attractor
of the random dynamical system $\phi$ generated by \eqref{main-eq}
for any $\alpha>0$ and $K>0$.  We show in section 5 that the global random
attractor of $\phi$ is a random horizontal curve
and show in section 6 that \eqref{main-eq} has  a rotation number, respectively,
provided that
$\alpha$ and $K$ are sufficiently large.

\section{Random Dynamical Systems}
In this section, we collect some basic knowledge
about general random dynamical system (see \cite{A,C} for details).
Let $(X,d)$ be a complete and separable metric space with Borel
$\sigma$-algebra $\mathcal{B}(X)$.
\begin{definition}\label{random-dynamical-system-def}
A {\rm continuous random dynamical system over
$(\Omega,\mathcal{F},\PP,(\theta_t)_{t\in\mathbb{R}})$} is a
$(\mathcal{B}(\mathbb{R}^+)\times\mathcal{F}\times\mathcal{B}(X),\mathcal{B}(X))$-measurable
mapping
\[
\begin{split}\varphi:\mathbb{R}^{+}\times\Omega\times X\rightarrow
X,\quad (t,\omega,x)\mapsto\varphi(t,\omega,x)
\end{split}
\]
such that the following properties hold:
\begin{itemize}
\vspace{-0.1in}\item[(1)] $\varphi(0,\omega,x)=x$ for all $\omega\in\Omega$;

\vspace{-0.1in}\item[(2)]
$\varphi(t+s,\omega,\cdot)=\varphi(t,\theta_s\omega,\varphi(s,\omega,\cdot))$
for all $s,t\geq0$ and $\omega\in\Omega$;

\vspace{-0.1in}\item[(3)] $\varphi(t,\omega,x)$ is continuous in $x$ for every $t\geq 0$ and $\omega\in\Omega$.
\end{itemize}
\end{definition}

For given $x\in X$ and $E,F\subset X$, we define
\begin{equation*}
d(x,F)=\inf_{y\in F}d(x,y)
\end{equation*}
and
\begin{equation*}
d_H(E,F)=\sup_{x\in E}d(x,F).
\end{equation*}
$d_H(E,F)$ is called the {\it  Hausdorff semi-distance} from $E$ to $F$.

\begin{definition}\label{random-set-attractor}
\begin{itemize}
\item[(1)] A set-valued mapping $\omega\mapsto
D(\omega):\Omega\rightarrow 2^{X}$ is said to be a {\rm random set} if the
mapping $\omega\mapsto d(x,D(\omega))$ is measurable for any $x\in
X$. If $\omega\mapsto d(x,D(\omega))$ is measurable for any $x\in
X$ and $D(\omega)$ is closed (compact) for each $\omega\in\Omega$,
then $\omega\mapsto D(\omega)$ is called a {\rm random closed (compact)
set}. A random set $\omega\mapsto D(\omega)$ is said to be {\rm bounded} if
there exist $x_0\in X$ and a random variable $R(\omega)>0$ such that
\begin{equation*}
D(\omega)\subset\{x\in X:d(x,x_0)\leq R(\omega)\}\quad\text{for
all}\quad \omega\in\Omega.
\end{equation*}

\vspace{-0.1in}\item[(2)] A random set $\omega\mapsto D(\omega)$ is called {\rm tempered}
provided that for some $x_0\in X$ and  $\PP$-a.e. $\omega\in\Omega$,
\begin{equation*}
\lim\limits_{t\rightarrow\infty}e^{-\beta t}\sup\{d(b,x_0):b\in
D(\theta_{-t}\omega)\}=0\quad\text{for all}\quad\beta>0.
\end{equation*}

\vspace{-0.1in}\item[(3)] A random set $\omega\mapsto B(\omega)$ is said to be a
{\rm random absorbing set} if for any tempered random set $\omega\mapsto
D(\omega)$, there exists $t_0(\omega)$ such that
\begin{equation*}
\varphi(t,\theta_{-t}\omega,D(\theta_{-t}\omega))\subset
B(\omega)\quad\text{for all}\quad t\geq
t_0(\omega),\,\,\omega\in\Omega.
\end{equation*}

\vspace{-0.1in}\item[(4)] A random set $\omega\mapsto B_1(\omega)$ is said to be a
{\rm random attracting set} if for any tempered random set $\omega\mapsto
D(\omega)$, we have
\begin{equation*}
\lim_{t\rightarrow\infty}d_{H}(\varphi(t,\theta_{-t}\omega,D(\theta_{-t}\omega),B_1(\omega))=0\quad\text{for
all}\quad\omega\in\Omega.
\end{equation*}

\item[(5)] A random compact set $\omega\mapsto A(\omega)$ is said to be
a {\rm global random attractor}  if it is a random attracting set and
$\varphi(t,\omega,A(\omega))=A(\theta_t\omega)$ for all
$\omega\in\Omega$ and $t\geq 0$.
\end{itemize}
\end{definition}

\begin{theorem}\label{existence-random-attractor1}
Let $\varphi$ be a continuous random dynamical system over
$(\Omega,\mathcal{F},\PP,(\theta_t)_{t\in\mathbb{R}})$.  If there is
a random compact attracting set $\omega\mapsto B(\omega)$ of
$\varphi$, then $\omega\mapsto A(\omega)$ is a global random attractor of
$\varphi$, where
\begin{equation*}
A(\omega)=\bigcap_{t>0}\overline{\bigcup_{\tau\geq
t}\varphi(\tau,\theta_{-\tau}\omega,B(\theta_{-\tau}\omega))},\quad\omega\in\Omega.
\end{equation*}
\end{theorem}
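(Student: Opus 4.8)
The plan is to prove this as the standard abstract existence theorem for pullback random attractors. The strategy is to verify that the candidate set $A(\omega)$, defined as the omega-limit set of the absorbing set under the pullback (random) dynamics, is (i) nonempty and compact, (ii) strictly invariant, and (iii) attracting. Throughout I would work with the pullback trajectories $\varphi(\tau,\theta_{-\tau}\omega,\cdot)$ rather than forward ones, since the cocycle property (Definition~\ref{random-dynamical-system-def}(2)) turns pullback composition into the clean semigroup-like relation that makes the invariance computation go through.

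First I would establish \emph{compactness and nonemptiness} of $A(\omega)$. Since $\omega\mapsto B(\omega)$ is a compact attracting set, pick any $t_n\to\infty$ and points $b_n\in B(\theta_{-t_n}\omega)$; the images $\varphi(t_n,\theta_{-t_n}\omega,b_n)$ eventually enter the compact set (one can arrange this using the attracting property applied to $B$ itself, which is tempered once we know compact sets are tempered, or more directly by noting the images asymptotically approach $B(\omega)$). A diagonal/subsequence argument then extracts a convergent subsequence whose limit lies in $A(\omega)$, giving nonemptiness; closedness is automatic from the intersection of closed sets, and compactness follows because $A(\omega)$ sits inside (a neighborhood of) the compact set $B(\omega)$. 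Concretely, $A(\omega)=\bigcap_{t>0}\overline{\bigcup_{\tau\geq t}\varphi(\tau,\theta_{-\tau}\omega,B(\theta_{-\tau}\omega))}$ is a nested intersection of nonempty closed sets contained in a fixed compact set, hence nonempty and compact.

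Next I would verify \emph{attraction}: for any tempered random set $D$, I must show $d_H(\varphi(t,\theta_{-t}\omega,D(\theta_{-t}\omega)),A(\omega))\to 0$. I would argue by contradiction. If attraction failed, there would exist $t_n\to\infty$ and $x_n\in D(\theta_{-t_n}\omega)$ with $\varphi(t_n,\theta_{-t_n}\omega,x_n)$ staying a fixed distance $\delta>0$ away from $A(\omega)$. But the attracting property of $B$ forces these images to approach $B(\omega)$, so after passing to a subsequence they converge to some limit point $y$. Using the cocycle relation to re-index the trajectories (writing the flow from time $-t_n$ as a flow from an intermediate time applied to the flow from $-t_n$ to that intermediate time), I would show $y\in A(\omega)$, contradicting the distance bound.

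The invariance $\varphi(t,\omega,A(\omega))=A(\theta_t\omega)$ is \textbf{the step I expect to be the main obstacle}, since both inclusions require carefully pushing limits through $\varphi$ using its continuity (Definition~\ref{random-dynamical-system-def}(3)) together with the cocycle identity. For $\varphi(t,\omega,A(\omega))\subseteq A(\theta_t\omega)$: take $a\in A(\omega)$ realized as $a=\lim_n \varphi(\tau_n,\theta_{-\tau_n}\omega,b_n)$; then by continuity $\varphi(t,\omega,a)=\lim_n\varphi(t,\omega,\varphi(\tau_n,\theta_{-\tau_n}\omega,b_n))=\lim_n\varphi(t+\tau_n,\theta_{-\tau_n}\omega,b_n)$, and rewriting $\theta_{-\tau_n}\omega=\theta_{-(t+\tau_n)}(\theta_t\omega)$ exhibits the limit as an element of $A(\theta_t\omega)$. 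The reverse inclusion is subtler: given $a'\in A(\theta_t\omega)$ written as a limit of pullback images at times $\sigma_n\to\infty$, I would peel off the last time-$t$ piece via the cocycle property to write each image as $\varphi(t,\omega,\varphi(\sigma_n-t,\theta_{-(\sigma_n-t)}\omega,b_n))$; the inner points lie in the compact attractor-approaching family, so a subsequence converges to some $a\in A(\omega)$, and continuity gives $a'=\varphi(t,\omega,a)$, proving $A(\theta_t\omega)\subseteq\varphi(t,\omega,A(\omega))$. The measurability of $\omega\mapsto A(\omega)$ follows from the explicit countable-intersection formula together with measurability of $\varphi$ and of $B$. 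I would assemble these pieces to conclude $A$ is a global random attractor in the sense of Definition~\ref{random-set-attractor}(5).
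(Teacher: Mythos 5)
Your proposal is essentially the standard Crauel--Flandoli construction of a pullback attractor from a compact attracting set, and it is sound in outline. Note, however, that the paper itself offers no argument for this theorem at all: its ``proof'' is just the citation \cite{A, C}. So what you have written is, in effect, the proof contained in those references: nonemptiness and compactness of the pullback omega-limit set by extracting convergent subsequences against the compact set $B(\omega)$, attraction by contradiction with a cocycle re-indexing, the two invariance inclusions by pushing limits through $\varphi$ using continuity and the cocycle identity, and measurability from the explicit formula. Your treatment of invariance (the part you flagged as the main obstacle) is exactly the standard argument, including the key rewriting $\theta_{-\tau_n}\omega=\theta_{-(t+\tau_n)}(\theta_t\omega)$ for the forward inclusion and the peeling-off of the last time-$t$ piece for the reverse one.

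One point deserves correction. Your nonemptiness step leans on applying the attracting property to $B$ itself, and you justify this with the parenthetical claim that compact random sets are tempered. That claim is false in general: temperedness (Definition \ref{random-set-attractor}(2)) is a growth condition on $\sup\{d(b,x_0):b\in B(\theta_{-t}\omega)\}$ as $t\to\infty$, which fiberwise compactness does not control; the radius of a random compact set can grow superexponentially along $\theta_{-t}\omega$. Your fallback (``the images asymptotically approach $B(\omega)$'') is precisely the property in question, so it cannot be invoked to bypass the issue. The honest formulation is that the theorem implicitly requires $B$ to belong to the universe of sets it attracts, i.e., $B$ tempered -- and this same hypothesis is what your reverse-invariance inclusion uses when you extract the convergent subsequence of inner points. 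This is a defect of the statement as given rather than of your strategy, and it is harmless where the theorem is applied in this paper: the compact absorbing set constructed in the proof of Theorem \ref{existence-random-attractor2} has radius $R_0(\omega)$ built from the tempered random variable $\tilde{r}(\omega)$, hence is tempered.
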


\begin{proof}
See \cite{A, C}.
\end{proof}

\section{Basic Settings}

In this section, we give some basic settings about
\eqref{main-eq} and show that it generates a random dynamical
system.

First,
let $Y=(u,v)^{\top}$ and $F(\theta_t\omega,Y)=(z(\theta_t\omega),f-\beta
g(u)+(1-\alpha)z(\theta_t\omega))^{\top}$.  System \eqref{RDE} can then be written as
\begin{equation}\label{matrixRDE}
\dot{Y}=CY+F(\theta_t\omega,Y),
\end{equation}
where $C$ is as in \eqref{c-eq}.

Recall that $z(\theta_t\omega)=(z_j(\theta_t\omega))_{j\in\mathbb{Z}_N^d}$
is the unique stationary solution of \eqref{OU-eq}. Note
that the random variable $|z_j(\omega)|$ is tempered and the mapping
$t\mapsto z_j(\theta_t\omega)$ is $\PP$-a.s. continuous (see
\cite{A, BLW}). More precisely, there is a $\theta_t$-invariant
$\tilde{\Omega}\subset\Omega$ with $\PP(\tilde{\Omega})=1$ such that
$t\mapsto z_j(\theta_t\omega)$ is continuous for
$\omega\in\tilde{\Omega}$ and $j\in\mathbb{Z}_N^d$.
We will consider \eqref{RDE} or \eqref{matrixRDE} for
$\omega\in\tilde{\Omega}$ and write $\tilde{\Omega}$ as $\Omega$
from now on.

Let $E=\mathbb{R}^{N^d}\times\mathbb{R}^{N^d}$ and
$F^{\omega}(t,Y):=F(\theta_t\omega,Y)$, then
$F^{\omega}(\cdot,\cdot):\mathbb{R}\times E\rightarrow E$ is
continuous in $t$ and globally Lipschitz continuous in $Y$ for each
$\omega\in\Omega$. By classical theory of ordinary differential
equations concerning existence and uniqueness of solutions, for each
$\omega\in\Omega$ and any $Y_0\in E$, \eqref{matrixRDE} has a
uniqueness solution $Y(t,\omega,Y_0)$, $t\geq0$, satisfying
\begin{equation}\label{integral-equa}
Y(t,\omega,Y_0)=e^{Ct}Y_0+\int_{0}^{t}e^{C(t-s)}F(\theta_s\omega,Y(s,\omega,Y_0))ds,\quad
t\geq0.
\end{equation}
Moreover, it follows from \cite{A} that $Y(t,\omega,Y_0)$ is measurable in $(t,\omega,Y_0)$.
Hence
\eqref{matrixRDE} generates a continuous random dynamical system on $E$,
\begin{equation}\label{RDE-RDS}
Y:\mathbb{R}^{+}\times\Omega\times E\rightarrow E,\quad
(t,\omega,Y_0)\mapsto Y(t,\omega,Y_0).
\end{equation}
Define a mapping $\phi:\mathbb{R}^{+}\times\Omega\times E\rightarrow
E$ by
\begin{equation}\label{SDE-RDS}
\phi(t,\omega,\phi_0)=Y(t,\omega,Y_0(\omega))+(0,z(\theta_t\omega))^{\top},
\end{equation}
where $\phi_0=(u_0,u_1)^{\top}\in E$ and
$Y_0(\omega)=(u_0,u_1-z(\omega))^{\top}$. Then $\phi$ is a
continuous random dynamical system associated with the problem
\eqref{main-eq} on $E$.

Recall that the eigenvalues of $C$ are given by (see \cite{LZ,QQZ}
for example)
\begin{equation}\label{eigenvalueC-1}
\mu_{i}^{\pm}=\frac{-\alpha\pm\sqrt{\alpha^2-4K\lambda_i}}{2},\quad
i=0,1,\dots,N^d-1.
\end{equation}
By \eqref{eigenvalueC-1}, $C$ has at least two real eigenvalues $0$
and $-\alpha$ with eigenvalues
$\eta_0=(1,\dots,1,0,\dots,0)^{\top}$,
$\eta_{-1}=(1,\dots,1,-\alpha,\dots,-\alpha)^{\top}\in E$,
respectively. Let $E_1=\text{span}\{\eta_0\}$,
$E_{-1}=\text{span}\{\eta_{-1}\}$, $E_{11}=E_{1}+E_{-1}$ and
$E_{22}=E_{11}^{\bot}$, the orthogonal complement space of $E_{11}$
in $E$, then $E=E_{11}\oplus E_{22}$. To control the
unboundedness  of solutions in the direction of $\eta_0$, we will
study \eqref{matrixRDE} in the cylindrical space $E_{1}/{\kappa
\eta_0\mathbb{Z}}\times E_{2}$, where  $E_2=E_{-1}\oplus E_{22}$ (see Section 4 for details).

Observe that the Lipschitz constant of $F$ with respect to  $Y$ in $E$ with the standard Euclidean norm
is independent of $\alpha>0$. But $\mu_i^+\to 0$ as $\alpha\to\infty$ for $i\geq 1$, which gives rise to
some difficulty for the investigation of \eqref{matrixRDE} in $E$ with  the standard
Euclidean norm. To overcome the difficulty,
 we introduce a new norm which is equivalent to the standard
Euclidean norm on $E$. Here, we only collect some results about the
new norm (see \cite{Mor, QQZ} for details).

Define two bilinear forms on $E_{11}$ and $E_{22}$, respectively. For
$Y_i=(u_i,v_i)^{\top}\in E_{11}$, $i=1,2$, let
\begin{equation}\label{inner-E11}
\langle Y_1,Y_2\rangle_{E_{11}}=\frac{\alpha^2}{4}\langle
u_1,u_2\rangle+\langle\frac{\alpha}{2}u_1+v_1,\frac{\alpha}{2}u_2+v_2\rangle,
\end{equation}
where $\langle\cdot,\cdot\rangle$ denotes the inner product on
$\mathbb{R}^{N^d}$, and for $Y_i=(u_i,v_i)^{\top}\in
E_{22},\,i=1,2$, let
\begin{equation}\label{inner-E22}
\langle Y_1,Y_2\rangle_{E_{22}}=\langle
KAu_1,u_2\rangle+(\frac{\alpha^2}{4}-\delta K\lambda_1)\langle
u_1,u_2\rangle+\langle\frac{\alpha}{2}u_1+v_1,\frac{\alpha}{2}u_2+v_2\rangle,
\end{equation}
where $\delta\in(0,1]$. It is easy to check that the
Poincar\'{e}-type inequality
\[
\begin{split}
\langle Au,u\rangle\geq\lambda_1\|u\|^2,\quad\forall\,\,
Y=(u,v)^{\top}\in E_{22}
\end{split}
\]
holds (see \cite{QQZ} for example), where $\|\cdot\|$ is the standard Euclidean norm
on $\mathbb{R}^{N^d}$. Thus \eqref{inner-E22} is
positive definite. For any $Y_i=Y^{(1)}_i+Y^{(2)}_i\in E$,
$i=1,2$, where $Y^{(1)}_1, Y^{(1)}_2\in E_{11}$, $Y^{(2)}_1,
Y^{(2)}_2\in E_{22}$, we define
\begin{equation}\label{inner-E}
\langle Y_1,Y_2\rangle_{E}=\langle
Y^{(1)}_1,Y^{(1)}_2\rangle_{E_{11}}+\langle
Y^{(2)}_1,Y^{(2)}_2\rangle_{E_{22}}.
\end{equation}

\begin{lemma}[\cite{QQZ}]\label{lemma-4}
\begin{itemize}
\item[(1)] \eqref{inner-E11} and \eqref{inner-E22} define inner
products on $E_{11}$ and $E_{22}$, respectively.

\item[(2)] \eqref{inner-E} defines an inner product on $E$, and the
corresponding norm $\|\cdot\|_{E}$ is equivalent to the standard
Euclidean norm on $E$.

\item[(3)] In terms of the inner product $\langle\cdot,\cdot\rangle_{E}$,
$E_1$ and $E_{11}$ are orthogonal to $E_{-1}$ and $E_{22}$,
respectively.

\item[(4)] In terms of the norm $\|\cdot\|_{E}$, the Lipschitz constant
$L_{F}$ of $F$ with respect to $Y$ satisfies
\begin{equation}\label{lipschitz-constant}
L_{F}=\frac{2c_2|\beta|}{\alpha},
\end{equation}
where $c_2$ is as in {\bf (HG)}.
\end{itemize}
\end{lemma}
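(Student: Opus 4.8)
The bulk of the lemma is routine linear algebra, so the plan is to dispatch (1)--(3) by direct verification and reserve the real work for (4). For (1), both forms are manifestly symmetric and bilinear, so only positive definiteness needs checking. Introducing the auxiliary coordinate $q=\frac{\alpha}{2}u+v$, the $E_{11}$-form reads $\frac{\alpha^2}{4}\|u\|^2+\|q\|^2$, which vanishes only when $u=0$ and $q=0$, i.e. $Y=0$. For the $E_{22}$-form I would invoke the stated Poincar\'e-type inequality $\langle Au,u\rangle\ge\lambda_1\|u\|^2$ (valid because every $(u,v)^\top\in E_{22}$ has $u\perp(1,\dots,1)^\top$) to estimate $\langle KAu,u\rangle+(\frac{\alpha^2}{4}-\delta K\lambda_1)\|u\|^2\ge(1-\delta)K\lambda_1\|u\|^2+\frac{\alpha^2}{4}\|u\|^2\ge\frac{\alpha^2}{4}\|u\|^2$ since $\delta\in(0,1]$; hence this form dominates $\frac{\alpha^2}{4}\|u\|^2+\|q\|^2$ and is again positive definite. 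Part (2) then follows at once: the direct-sum form $\langle\cdot,\cdot\rangle_E$ of \eqref{inner-E} is symmetric and bilinear, positive-definiteness is inherited componentwise from (1) via $E=E_{11}\oplus E_{22}$, and equivalence with the Euclidean norm is automatic because $E$ is finite-dimensional.

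For (3), the orthogonality $E_{11}\perp E_{22}$ in $\langle\cdot,\cdot\rangle_E$ is built into the definition \eqref{inner-E}. The remaining claim $E_1\perp E_{-1}$ reduces to the single computation of $\langle\eta_0,\eta_{-1}\rangle_{E_{11}}$; writing $\eta_0=(\mathbf 1,0)$, $\eta_{-1}=(\mathbf 1,-\alpha\mathbf 1)$ with $\mathbf 1=(1,\dots,1)^\top$, the first term gives $\frac{\alpha^2}{4}\|\mathbf 1\|^2$ while the second gives $\langle\frac{\alpha}{2}\mathbf 1,-\frac{\alpha}{2}\mathbf 1\rangle=-\frac{\alpha^2}{4}\|\mathbf 1\|^2$, and the two cancel.

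The crux is (4). Since $F(\theta_t\omega,Y)$ depends on $Y=(u,v)^\top$ only through $-\beta g(u)$ in the second slot, for $Y_i=(u_i,v_i)^\top$ the difference is the pure-velocity vector $F(\theta_t\omega,Y_1)-F(\theta_t\omega,Y_2)=(0,w)^\top$ with $w=-\beta(g(u_1)-g(u_2))$, and {\bf (HG)} gives $\|w\|\le c_2|\beta|\,\|u_1-u_2\|$ by the mean value theorem componentwise. The plan is to establish two exact norm facts. First I would show $\|(0,w)^\top\|_E=\|w\|$: decomposing $w=\bar w\mathbf 1+w^\perp$ with $w^\perp\perp\mathbf 1$ places the $E_{11}$-part at $(0,\bar w\mathbf 1)$ and the $E_{22}$-part at $(0,w^\perp)$, and because $u=0$ makes the coordinate $q$ equal the full velocity in each form, both contributions collapse to $\|\bar w\mathbf 1\|^2$ and $\|w^\perp\|^2$, summing to $\|w\|^2$. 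Second, for any $\zeta=(p,q)^\top\in E$ I would prove the lower bound $\|\zeta\|_E\ge\frac{\alpha}{2}\|p\|$: splitting $p=\bar p\mathbf 1+p^\perp$, the $E_{11}$-form contributes at least $\frac{\alpha^2}{4}\|\bar p\mathbf 1\|^2$ and, using Poincar\'e exactly as in (1), the $E_{22}$-form contributes at least $\frac{\alpha^2}{4}\|p^\perp\|^2$, whence $\|\zeta\|_E^2\ge\frac{\alpha^2}{4}\|p\|^2$. Applying this with $\zeta=Y_1-Y_2$ and $p=u_1-u_2$ yields $\|u_1-u_2\|\le\frac{2}{\alpha}\|Y_1-Y_2\|_E$, and combining everything gives $\|F(\theta_t\omega,Y_1)-F(\theta_t\omega,Y_2)\|_E=\|w\|\le c_2|\beta|\,\|u_1-u_2\|\le\frac{2c_2|\beta|}{\alpha}\|Y_1-Y_2\|_E$, which is \eqref{lipschitz-constant}.

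The main obstacle is precisely the $1/\alpha$ scaling in (4): a naive Euclidean estimate would lose this factor, and it is exactly this gain---coming from the fact that the $u$-direction carries weight $\alpha^2/4$ in both forms while the forcing perturbation lives purely in the velocity slot with unit weight---that makes the construction useful for the $\alpha\to\infty$ analysis. I would take care that $\|u_1-u_2\|$ is bounded by the correct, velocity-independent half of $\|Y_1-Y_2\|_E$; the role of $\delta\in(0,1]$ is only to keep the $E_{22}$-form positive definite while leaving the $\frac{\alpha^2}{4}\|u\|^2$ floor intact, so it does not enter the final constant.
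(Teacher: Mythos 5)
Your proposal is correct, but note that the paper itself never proves this lemma: it is quoted from \cite{QQZ} (with the norm construction attributed to \cite{Mor}), so your argument is a self-contained substitute for a citation rather than a parallel to an in-paper proof. Your verification of (1)--(3) matches what one finds in the cited literature, including the two points that actually need care: that the Poincar\'{e}-type inequality applies because the $u$-component of any element of $E_{22}$ is orthogonal to $(1,\dots,1)^{\top}$, and that the possibly negative coefficient $\frac{\alpha^2}{4}-\delta K\lambda_1$ is absorbed by writing the estimate as $(1-\delta)K\lambda_1\|u\|^2+\frac{\alpha^2}{4}\|u\|^2$. Your treatment of (4) isolates the right mechanism: the perturbation $F(\theta_t\omega,Y_1)-F(\theta_t\omega,Y_2)$ lives purely in the velocity slot, where both quadratic forms assign unit weight (so its $\|\cdot\|_E$-norm equals its Euclidean norm, using that $(0,\bar w\mathbf 1)\in E_{11}$ and $(0,w^{\perp})\in E_{22}$, both of which you correctly verify are in the right subspaces), while the $u$-direction carries weight at least $\frac{\alpha^2}{4}$ in both forms, giving $\|u_1-u_2\|\leq\frac{2}{\alpha}\|Y_1-Y_2\|_{E}$; this is precisely the $1/\alpha$ gain that the whole construction exists to produce. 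One small point of precision: your argument establishes $\|F(\theta_t\omega,Y_1)-F(\theta_t\omega,Y_2)\|_E\leq\frac{2c_2|\beta|}{\alpha}\|Y_1-Y_2\|_E$, i.e.\ that $\frac{2c_2|\beta|}{\alpha}$ is an admissible Lipschitz constant, not that it is the exact (smallest) one as the equality sign in \eqref{lipschitz-constant} might suggest; since the paper only ever uses $L_F$ as an upper bound (in conditions like $a>4L_F$), this is all that is needed.
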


Note that $E_2$ is orthogonal to $E_1$
and $E=E_1\oplus E_2$. Denote by $P$ and $Q\,(=I-P)$ the projections
from $E$ into $E_1$ and $E_2$, respectively. Set
\begin{equation}\label{a-eq}
a=\frac{\alpha}{2}-\Big|\frac{\alpha}{2}-\frac{\delta
K\lambda_1}{\alpha}\Big|.
\end{equation}

\begin{lemma}\label{lemma-3}
\begin{itemize}
\item[(1)] For any $Y\in E_2$, $\langle
CY,Y\rangle_{E}\leq-a\|Y\|_{E}^{2}$.

\item[(2)] $\|e^{Ct}Q\|_E\leq e^{-at}$ for $t\geq0$.

\item[(3)] $e^{Ct}PY=PY$ for $Y\in E$, $t\geq0$.
\end{itemize}
\end{lemma}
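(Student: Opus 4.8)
The plan is to prove (3) first, then the key inequality (1), and finally deduce (2) from (1) by a differential-inequality argument.

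Part (3) is immediate: by \eqref{c-eq} and (HA), $\eta_0$ is an eigenvector of $C$ with eigenvalue $0$, so $C\eta_0=0$ and $e^{Ct}\eta_0=\eta_0$ for all $t\ge 0$. Since $PY\in E_1=\operatorname{span}\{\eta_0\}$, writing $PY=c\,\eta_0$ gives $e^{Ct}PY=c\,e^{Ct}\eta_0=c\,\eta_0=PY$. For part (1), the first step is to split $Y\in E_2=E_{-1}\oplus E_{22}$ as $Y=Y_a+Y_b$ with $Y_a\in E_{-1}$ and $Y_b\in E_{22}$. Both summand spaces are $C$-invariant: $C\eta_{-1}=-\alpha\eta_{-1}$, and $C$ leaves $E_{22}$ invariant because of the block form of $C$ together with $A(1,\dots,1)^\top=0$ from (HA). Since $E_{-1}\subset E_{11}$, the definition \eqref{inner-E} yields both $\langle CY,Y\rangle_E=\langle CY_a,Y_a\rangle_{E_{11}}+\langle CY_b,Y_b\rangle_{E_{22}}$ and $\|Y\|_E^2=\|Y_a\|_{E_{11}}^2+\|Y_b\|_{E_{22}}^2$, so it suffices to treat each piece. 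On $E_{-1}$ one has $CY_a=-\alpha Y_a$, hence $\langle CY_a,Y_a\rangle_{E_{11}}=-\alpha\|Y_a\|_{E_{11}}^2\le-a\|Y_a\|_{E_{11}}^2$ because $a\le\tfrac\alpha2<\alpha$.

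The genuinely computational part is to show $\langle CY_b,Y_b\rangle_{E_{22}}\le-a\|Y_b\|_{E_{22}}^2$. Writing $Y_b=(u,v)^\top$ and $CY_b=(v,-KAu-\alpha v)^\top$, a direct expansion of \eqref{inner-E22} (the symmetry of $A$ cancels the $K\langle Au,v\rangle$ and $\tfrac{\alpha^2}{4}\langle u,v\rangle$ terms) gives
$$\langle CY_b,Y_b\rangle_{E_{22}}=-\delta K\lambda_1\langle u,v\rangle-\tfrac{\alpha K}{2}\langle Au,u\rangle-\tfrac{\alpha}{2}\|v\|^2.$$
Adding $a\|Y_b\|_{E_{22}}^2$ and collecting terms expresses $\langle CY_b,Y_b\rangle_{E_{22}}+a\|Y_b\|_{E_{22}}^2$ as a quadratic form in $\langle Au,u\rangle,\|u\|^2,\langle u,v\rangle,\|v\|^2$ whose coefficient of $\langle Au,u\rangle$ is $K(a-\tfrac\alpha2)\le0$; applying the Poincar\'e inequality $\langle Au,u\rangle\ge\lambda_1\|u\|^2$ on $E_{22}$ reduces matters to a form $P\|u\|^2+Q\langle u,v\rangle+R\|v\|^2$. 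I then split according to the two branches of the absolute value in \eqref{a-eq}. If $\tfrac{\delta K\lambda_1}{\alpha}\le\tfrac\alpha2$ (so $a=\tfrac{\delta K\lambda_1}{\alpha}$), the cross coefficient $Q=a\alpha-\delta K\lambda_1$ vanishes and one checks $P\le0$, $R\le0$ directly. If $\tfrac{\delta K\lambda_1}{\alpha}>\tfrac\alpha2$ (so $a=\alpha-\tfrac{\delta K\lambda_1}{\alpha}$), then $P,Q,R<0$, and after bounding $Q\langle u,v\rangle\le|Q|\,\|u\|\,\|v\|$ by Cauchy--Schwarz, negative semidefiniteness of $Ps^2+|Q|st+Rt^2$ follows from $4PR-Q^2\ge0$, which collapses to the trivial inequality $0\le K\lambda_1(1-\delta)$. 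This yields $\langle CY_b,Y_b\rangle_{E_{22}}+a\|Y_b\|_{E_{22}}^2\le0$, and combining with the $E_{-1}$ estimate completes (1).

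For part (2), fix $Y\in E$; since $QY\in E_2$ and $C$ leaves $E_2$ invariant, $W(t):=e^{Ct}QY\in E_2$ and $\dot W=CW$, so by (1)
$$\tfrac{d}{dt}\|W(t)\|_E^2=2\langle CW(t),W(t)\rangle_E\le-2a\|W(t)\|_E^2.$$
Gronwall's inequality gives $\|W(t)\|_E\le e^{-at}\|QY\|_E$. Because $E=E_1\oplus E_2$ is an orthogonal decomposition for $\langle\cdot,\cdot\rangle_E$ (as noted after Lemma \ref{lemma-4}), $Q$ is the orthogonal projection onto $E_2$, so $\|QY\|_E\le\|Y\|_E$; taking the supremum over $\|Y\|_E\le1$ yields $\|e^{Ct}Q\|_E\le e^{-at}$.

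The main obstacle is the $E_{22}$ estimate in (1). The algebra is routine but must be organized so that the two branches defining $a$ in \eqref{a-eq} are handled separately; the only nontrivial point is verifying $4PR-Q^2\ge0$ in the regime $\delta K\lambda_1>\alpha^2/2$, which is precisely where the specific choice of $a$ in \eqref{a-eq} and the restriction $\delta\in(0,1]$ are used.
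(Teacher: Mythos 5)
Your proposal is correct, and it is worth noting how it relates to what the paper actually does: the paper proves only part (3) (exactly as you do, from $CPY=0$), and dismisses parts (1) and (2) with a citation to ``similar arguments as in Lemma 2.3 and Corollary 2.4 in \cite{QQWZ}.'' So your argument is not a restatement of the paper's proof but a self-contained reconstruction of the material the paper outsources. Your route --- splitting $E_2=E_{-1}\oplus E_{22}$ into its two $C$-invariant pieces, using the eigenvalue $-\alpha$ on $E_{-1}$, and reducing the $E_{22}$ estimate to a quadratic form in $\|u\|$, $\|v\|$ --- is the natural energy-estimate argument, and I checked the algebra: the cross terms $K\langle Au,v\rangle$ and $\frac{\alpha^2}{4}\langle u,v\rangle$ do cancel by symmetry of $A$, giving $\langle CY_b,Y_b\rangle_{E_{22}}=-\delta K\lambda_1\langle u,v\rangle-\frac{\alpha K}{2}\langle Au,u\rangle-\frac{\alpha}{2}\|v\|^2$; the coefficient $K\bigl(a-\frac{\alpha}{2}\bigr)$ of $\langle Au,u\rangle$ is indeed $\le 0$, so the Poincar\'{e} inequality is applied with the correct sign; the cross coefficient $Q=a\alpha-\delta K\lambda_1$ vanishes precisely on the branch $a=\frac{\delta K\lambda_1}{\alpha}$; and on the other branch one gets $4PR-Q^2=(\alpha^2-2\delta K\lambda_1)^2\,\frac{(1-\delta)K\lambda_1}{\alpha^2}\ge 0$, as you claim. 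Your deduction of (2) from (1) via $\frac{d}{dt}\|W\|_E^2=2\langle CW,W\rangle_E$, Gronwall, and $\|QY\|_E\le\|Y\|_E$ (which uses the orthogonality of $E_1$ and $E_2$ in $\langle\cdot,\cdot\rangle_E$, Lemma \ref{lemma-4}(3)) is also complete. What your write-up buys, beyond self-containedness, is that it makes explicit exactly where the two branches of \eqref{a-eq} and the restriction $\delta\in(0,1]$ enter --- information that is invisible in the paper's one-line citation.
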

\begin{proof}
(1) and (2) follow from
similar arguments as in Lemma 2.3 and Corollary 2.4 in \cite{QQWZ}. Let us show (3).  For $Y\in E$, since $PY\in E_1$ and
$\frac{d}{dt}e^{Ct}PY=e^{Ct}CPY=0$, we have $e^{Ct}PY=e^{C0}PY=PY$.
\end{proof}

By Lemma \ref{lemma-3} (2), the constant $a$ in \eqref{a-eq} describes the exponential decay rate of
$e^{Ct}|_{QE}$ in the new norm.
By Lemma \ref{lemma-4} (4),  $L_F$ tends to $0$ as $\alpha\to\infty$ with respect
to the new norm, which essentially helps to overcome the difficulty
induced from  the fact that $\mu_i^+\to 0$ as $\alpha\to 0$ for
$i\geq 1$.

The following lemma will be needed to take care unboundedness of the solutions
along the direction of the eigenvectors corresponding to $\mu_0^+$.

\begin{lemma}\label{translation-invariant}
Let $p_0=\kappa\eta_0\in E$ ($\kappa$ is the smallest
positive period of $g$). The random dynamical system $Y$ defined in
\eqref{RDE-RDS} is $p_0$-translation invariant in the sense that
\[
\begin{split}
Y(t,\omega,Y_0+p_0)=Y(t,\omega,Y_0)+p_0,\quad
t\geq0,\,\,\omega\in\Omega,\,\,Y_0\in E.
\end{split}
\]
\end{lemma}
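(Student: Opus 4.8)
The plan is to reduce the claim to the uniqueness of solutions of \eqref{matrixRDE}. Fixing $\omega\in\Omega$ and $Y_0\in E$, I would set $\tilde Y(t):=Y(t,\omega,Y_0)+p_0$ and show that $\tilde Y(\cdot)$ is itself a solution of \eqref{matrixRDE} with initial datum $Y_0+p_0$. Since \eqref{matrixRDE} admits a unique solution for each initial condition (by the global Lipschitz continuity of $F^{\omega}$ established above), this immediately forces $\tilde Y(t)=Y(t,\omega,Y_0+p_0)$, which is exactly the asserted translation invariance.

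The verification rests on two elementary structural observations about $p_0=\kappa\eta_0$. First, $\eta_0$ is an eigenvector of $C$ for the eigenvalue $\mu_0^+=0$, so $Cp_0=\kappa C\eta_0=0$, and consequently $C(Y+p_0)=CY$ for every $Y\in E$. Second, writing $Y=(u,v)^{\top}$ and recalling $\eta_0=(1,\dots,1,0,\dots,0)^{\top}$, the vector $p_0$ increments each coordinate $u_j$ by exactly one period $\kappa$ while leaving $v$ fixed. Hence, by the periodicity $g(x+\kappa)=g(x)$ in \textbf{(HG)}, one has $g(u_j+\kappa)=g(u_j)$ for every $j$, so the nonlinear term is unaffected and $F(\theta_t\omega,Y+p_0)=F(\theta_t\omega,Y)$ for all $t$ and all $Y$.

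Combining the two observations, for $Y(t):=Y(t,\omega,Y_0)$ I compute
\[
\dot{\tilde Y}(t)=\dot Y(t)=CY(t)+F(\theta_t\omega,Y(t))=C\tilde Y(t)+F(\theta_t\omega,\tilde Y(t)),
\]
where the final equality uses $C\tilde Y=CY$ together with $F(\theta_t\omega,\tilde Y)=F(\theta_t\omega,Y)$; since also $\tilde Y(0)=Y_0+p_0$, the function $\tilde Y$ solves \eqref{matrixRDE} issued from $Y_0+p_0$, and uniqueness finishes the proof. Equivalently, the same conclusion follows directly from the variation-of-constants formula \eqref{integral-equa} upon noting that $Cp_0=0$ yields $e^{Ct}p_0=p_0$. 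I do not anticipate any real obstacle: the argument is a short compatibility check. The only point deserving attention is that the single translation $p_0=\kappa\eta_0$ must be simultaneously compatible with both the linear flow (through $C\eta_0=0$) and the periodic nonlinearity (through a shift by exactly the period $\kappa$ in each active coordinate), which is precisely what makes $\kappa\eta_0\mathbb{Z}$ the natural lattice to quotient by when passing to the cylindrical phase space $E_1/\kappa\eta_0\mathbb{Z}\times E_2$.
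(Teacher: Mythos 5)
Your proof is correct and follows essentially the same route as the paper's: verify that $Cp_0=0$ and that $F(\theta_t\omega,\cdot)$ is $p_0$-periodic, conclude that $Y(t,\omega,Y_0)+p_0$ solves \eqref{matrixRDE} with initial datum $Y_0+p_0$, and invoke uniqueness of solutions. Your write-up merely spells out the two structural facts (why $C\eta_0=0$ and why the shift by $\kappa$ in the $u$-coordinates leaves $g$ unchanged) that the paper states without elaboration.
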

\begin{proof}
Since $Cp_0=0$ and $F(t,\omega,Y)$ is $p_0$-periodic in $Y$,
$Y(t,\omega,Y_0)+p_0$ is a solution of \eqref{matrixRDE} with
initial data $Y_0+p_0$. Thus,
$Y(t,\omega,Y_0)+p_0=Y(t,\omega,Y_0+p_0)$.
\end{proof}
By \eqref{RDE-RDS} and Lemma \ref{translation-invariant}, $\phi$ is
also $p_0$-translation invariant.

\begin{lemma}\label{tempered-random-variable}
For any $\epsilon>0$, there is tempered random variable
$\tilde{r}(\omega)>0$ such that
\begin{equation}
\begin{split}
\|z(\theta_{t}\omega)\|\leq
e^{\epsilon|t|}\tilde{r}(\omega)\quad\text{for all}\quad
t\in\mathbb{R},\,\,\omega\in\Omega,
\end{split}\label{2j}
\end{equation}
where $\tilde{r}(\omega)$ satisfies
\begin{equation}
\begin{split}
e^{-\epsilon|t|}\tilde{r}(\omega)\leq\tilde{r}(\theta_t\omega)\leq
e^{\epsilon|t|}\tilde{r}(\omega),\quad
t\in\mathbb{R},\,\,\omega\in\Omega.
\end{split}\label{2k}
\end{equation}
\end{lemma}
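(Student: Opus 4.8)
The plan is to construct $\tilde r$ explicitly as a weighted supremum of $\|z(\theta_\cdot\omega)\|$ and then read both displayed estimates off its definition; the only substantive input is the subexponential growth of the Ornstein--Uhlenbeck process. From the properties of $z$ recorded at the start of Section 3 and in \cite{A, BLW}, each component of $z(\theta_t\omega)$, and hence $\|z(\theta_t\omega)\|$, grows sublinearly, i.e. for $\PP$-a.e. $\omega\in\Omega$,
\[
\lim_{|t|\to\infty}\frac{\|z(\theta_t\omega)\|}{|t|}=0,
\]
so in particular $\|z(\theta_t\omega)\|=o(e^{\delta|t|})$ as $|t|\to\infty$ for every $\delta>0$. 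Since $t\mapsto z(\theta_t\omega)$ is continuous on the invariant full-measure set already denoted $\Omega$, I would set
\[
\tilde r(\omega):=1+\sup_{t\in\mathbb{R}}\big(\|z(\theta_t\omega)\|\,e^{-\epsilon|t|}\big).
\]
Taking $\delta=\epsilon$ in the growth bound shows that the expression under the supremum tends to $0$ as $|t|\to\infty$; being continuous in $t$ it attains a finite maximum, so $1\le\tilde r(\omega)<\infty$ almost surely, which also secures $\tilde r(\omega)>0$ (the additive constant $1$ is inserted purely for strict positivity and, as noted below, does not disturb \eqref{2k}).

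Inequality \eqref{2j} is then immediate: for every $t$ one has $\|z(\theta_t\omega)\|=\big(\|z(\theta_t\omega)\|e^{-\epsilon|t|}\big)e^{\epsilon|t|}\le\tilde r(\omega)e^{\epsilon|t|}$. For \eqref{2k}, write $S(\omega):=\tilde r(\omega)-1$ and perform the change of variable $\tau=t+s$ in $S(\theta_s\omega)=\sup_t\|z(\theta_{t+s}\omega)\|e^{-\epsilon|t|}$, using the reverse triangle inequalities $|\tau|-|s|\le|\tau-s|\le|\tau|+|s|$ to compare $e^{-\epsilon|\tau-s|}$ with $e^{-\epsilon|\tau|}$. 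This yields $e^{-\epsilon|s|}S(\omega)\le S(\theta_s\omega)\le e^{\epsilon|s|}S(\omega)$, and since $e^{-\epsilon|s|}\le 1\le e^{\epsilon|s|}$ the same two-sided bound survives after adding back the constant $1$, giving \eqref{2k} for $\tilde r$.

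Finally I would verify that $\tilde r$ is tempered, i.e. $\lim_{|t|\to\infty}e^{-\beta|t|}\tilde r(\theta_t\omega)=0$ for every $\beta>0$. The crude bound from \eqref{2k} only gives this for $\beta>\epsilon$, so to cover all $\beta>0$ I would return to the subexponential growth: fixing $\delta<\min(\epsilon,\beta)$ there is $C_\delta(\omega)<\infty$ with $\|z(\theta_\tau\omega)\|\le C_\delta(\omega)e^{\delta|\tau|}$ for all $\tau$, whence the estimate $\delta|s+t|-\epsilon|s|\le(\delta-\epsilon)|s|+\delta|t|\le\delta|t|$ gives $\sup_s e^{\delta|s+t|-\epsilon|s|}\le e^{\delta|t|}$, so that $\tilde r(\theta_t\omega)\le 1+C_\delta(\omega)e^{\delta|t|}$ and therefore $e^{-\beta|t|}\tilde r(\theta_t\omega)\to 0$.

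The main obstacle is conceptual rather than computational: everything rests on the finiteness of the defining supremum, which in turn rests on the sublinear (hence subexponential) growth of the stationary Ornstein--Uhlenbeck process --- a fact I would import from the ergodic theory of $z$ via \cite{A, BLW} rather than reprove. Once that growth is in hand, the two displayed inequalities are formal consequences of the definition together with the triangle inequality, and the only point demanding a little care is upgrading temperedness from rate $\epsilon$ to arbitrary $\beta>0$, which the refined choice $\delta<\min(\epsilon,\beta)$ settles.
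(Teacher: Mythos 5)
Your proof is correct, but it takes a more self-contained route than the paper. The paper's own proof works componentwise: since each $|z_j(\omega)|$ is tempered and $t\mapsto z_j(\theta_t\omega)$ is a.s.\ continuous, it invokes Proposition 4.3.3 of Arnold \cite{A} as a black box to produce, for each $j$, a tempered random variable $r_j(\omega)$ with $|z_j(\omega)|\le r_j(\omega)$ and $e^{-\epsilon|t|}r_j(\omega)\le r_j(\theta_t\omega)\le e^{\epsilon|t|}r_j(\omega)$, and then simply sets $\tilde r(\omega)=\|r(\omega)\|=\bigl(\sum_j r_j^2(\omega)\bigr)^{1/2}$, so that \eqref{2j} and \eqref{2k} follow by summing squares. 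What you do instead is essentially unpack the construction that lies inside that cited proposition, applied directly to the full vector norm: you define $\tilde r$ as the weighted supremum $1+\sup_t\|z(\theta_t\omega)\|e^{-\epsilon|t|}$, justify its finiteness by the two-sided sublinear growth of the stationary Ornstein--Uhlenbeck process, and then derive \eqref{2j}, the slowly-varying property \eqref{2k} (via the change of variables and the triangle inequality on $|\tau-s|$), and temperedness for every rate $\beta>0$ (via the refined exponent $\delta<\min(\epsilon,\beta)$) by hand. Both arguments rest on the same underlying fact --- subexponential growth of the stationary OU process along the flow, which is also the hypothesis behind Arnold's proposition --- so the mathematical content coincides; your version buys transparency and avoids the componentwise assembly, at the cost of having to carry out the verification (including the genuinely needed upgrade of temperedness from rate $\epsilon$ to arbitrary $\beta$, which you handle correctly), while the paper's version is shorter by citation. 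The only point you leave implicit is the measurability of the supremum, which is immediate since continuity in $t$ lets one restrict the supremum to rational $t$.
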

\begin{proof}
For $j\in\mathbb{Z}_N^d$, since $|z_j(\omega)|$ is a tempered random
variable and the mapping $t\mapsto \ln |z_j(\theta_t\omega)|$ is
$\PP$-a.s. continuous, it follow from Proposition 4.3.3 in \cite{A}
that for any $\epsilon_j>0$ there is an tempered random variable
$r_j(\omega)>0$ such that
\[
\begin{split}
\frac{1}{r_j(\omega)}\leq|z_j(\omega)|\leq r_j(\omega),
\end{split}
\]
where $r_j(\omega)$ satisfies, for $\PP$-a.e. $\omega\in\Omega$,
\begin{equation}
\begin{split}
e^{-\epsilon_j|t|}r_j(\omega)\leq r_j(\theta_t\omega)\leq
e^{\epsilon_j|t|}r_j(\omega),\quad t\in\mathbb{R}.
\end{split}\label{2l}
\end{equation}

Let $r(\omega)=(r_j(\omega))_{j\in\mathbb{Z}_N^d}$,
$\omega\in\Omega$ and take $\epsilon_j=\epsilon$,
$j\in\mathbb{Z}_N^d$, then we have
\[
\begin{split}
\|z(\theta_t\omega)\|\leq\Bigg(\sum\limits_{j\in\mathbb{Z}_N^d}e^{2\epsilon|t|}r^2_j(\omega)\Bigg)^{\frac{1}{2}}=e^{\epsilon|t|}\|r(\omega)\|,\quad
t\in\mathbb{R},\,\,\omega\in\Omega.
\end{split}
\]
Let $\tilde{r}(\omega)=\|r(\omega)\|$, $\omega\in\Omega$. Then \eqref{2j}
is satisfied  and \eqref{2k} is trivial from \eqref{2l}.
\end{proof}

\section{Existence of Random Attractor}
In this section, we study the existence of a random attractor. We
assume that $p_0=\kappa\eta_0\in E_{1}$ and $\delta\in (0,1]$ is
such that $a>0$, where $a$ is as in \eqref{a-eq}. We remark in the
end of this section that such $\delta$ always exists.

By Lemma \ref{translation-invariant} and the fact that $C$ has a
zero eigenvalue, we will define a random dynamical system
$\mathbf{Y}$  on some cylindrical space  induced from the random dynamical system  $Y$ on $E$. Then by properties
of $Y$ restricted on $E_2$, we can prove the existence of a global random
attractor of $\mathbf{Y}$. Thus, we can say that $Y$ has a global
random attractor which is unbounded along $E_1$ and bounded along $E_2$. Now, we define $\mathbf{Y}$.

Let $\mathbb{T}^{1}=E_{1}/{p_0\mathbb{Z}}$ and
$\mathbf{E}=\mathbb{T}^{1}\times E_{2}$, where
$p_0\mathbb{Z}=\{kp_0:k\in\mathbb{Z}\}$. For $Y_0\in E$, let
$\mathbf{Y_0}:=Y_0\,\,(mod\,p_0)$, which is an element of $\mathbf{E}$.
Note that, by Lemma \ref{translation-invariant},
$Y(t,\omega,Y_0+kp_0)=Y(t,\omega,Y_0)+kp_0,\,\,\forall
k\in\mathbb{Z}$ for $t\geq0$, $\omega\in\Omega$ and $Y_0\in E$. With
this, we define
$\mathbf{Y}:\mathbb{R}^{+}\times\Omega\times\mathbf{E}\rightarrow\mathbf{E}$
by setting
\begin{equation}\label{induced-RDE-RDS}
\mathbf{Y}(t,\omega,\mathbf{Y_0})=Y(t,\omega,Y_0)\,\,(mod\,p_0),
\end{equation}
where $\mathbf{Y_0}=Y_0\,\,(mod\,p_0)$. Then
$\mathbf{Y}:\mathbb{R}^{+}\times\Omega\times\mathbf{E}\rightarrow\mathbf{E}$
is a random dynamical system. Similarly, the random dynamical system
$\phi$ defined in \eqref{SDE-RDS} also induces a random dynamical
system $\mathbf{\Phi}$ on $\mathbf{E}$. By \eqref{RDE-RDS},
\eqref{SDE-RDS} and \eqref{induced-RDE-RDS}, $\mathbf{\Phi}$ is
defined by
\begin{equation}\label{induced-SDE-RDS}
\mathbf{\Phi}(t,\omega,\mathbf{\Phi_0})=\mathbf{Y}(t,\omega,\mathbf{Y_0})+\tilde{z}(\theta_t\omega)\,\,(mod\,p_0),\quad
t\geq0,\,\,\omega\in\Omega,
\end{equation}
where $\mathbf{\Phi_0}=\phi_0\,\,(mod\,p_0)$,
$\tilde{z}(\theta_t\omega)=(0,z(\theta_t\omega))^{\top}$ and
$\mathbf{Y_0}=\mathbf{\Phi_0}-\tilde{z}(\omega)\,\,(mod\,p_0)$.

Recall that  $P$ and $Q\,(=I-P)$ are the projections
from $E$ into $E_1$ and $E_2$, respectively.

\begin{definition}\label{pseudo-ball-tempered}
Let $\omega\in\Omega$ and $R:\Omega\rightarrow\mathbb{R}^{+}$ be a
random variable. A {\rm random pseudo-ball $\omega\mapsto B(\omega)$ in
$E$ with random radius $\omega\mapsto R(\omega)$} is a set of the
form
\[
\begin{split}
\omega\mapsto B(\omega)=\{b\in E:\|Qb\|_E\leq
R(\omega)\}.
\end{split}
\]
Furthermore, a random set $\omega\mapsto B(\omega)\in E$ is called
{\rm
pseudo-tempered} provided that $\omega\mapsto QB(\omega)$ is a tempered
random set in $E$, i.e., for $\PP$-a.e.$\omega\in\Omega$,
\[
\begin{split}
\lim\limits_{t\rightarrow\infty}e^{-\beta t}\sup\{\|Qb\|_{E}:b\in
B(\theta_{-t}\omega)\}=0\quad\text{for all}\quad\beta>0.
\end{split}
\]
\end{definition}
Clearly, any random pseudo-ball $\omega\mapsto B(\omega)$ in $E$ has
the form $\omega\mapsto E_1\times QB(\omega)$, where $\omega\mapsto
QB(\omega)$ is a random ball in $E_2$. Then the measurability of
$\omega\mapsto B(\omega)$ is trivial. By Definition
\ref{pseudo-ball-tempered}, if $\omega\mapsto B(\omega)$ is a random
pseudo-ball in $E$, then $\omega\mapsto B(\omega)\,\,(mod\,p_0)$ is
random bounded set in $\mathbf{E}$. And if $\omega\mapsto B(\omega)$
is a pseudo-tempered random set in $E$, then $\omega\mapsto
B(\omega)\,\,(mod\,p_0)$ is a tempered random set in $\mathbf{E}$.

We next show the existence of a global random attractor of the induced
random dynamical system $\mathbf{Y}$ defined in
\eqref{induced-RDE-RDS}.
\begin{theorem}\label{existence-random-attractor2}
Let $\alpha>0$ and $K>0$. Then the induced random dynamical system
$\mathbf{Y}$ defined in \eqref{induced-RDE-RDS} has a global random
attractor $\omega\mapsto\mathbf{A_0}(\omega)$.
\end{theorem}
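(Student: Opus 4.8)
The plan is to invoke Theorem \ref{existence-random-attractor1}: it suffices to exhibit a random compact attracting set for $\mathbf{Y}$. Since $\mathbf{E}=\mathbb{T}^{1}\times E_{2}$ with $\mathbb{T}^{1}$ compact and $E_{2}$ finite-dimensional, any random set of the form $\mathbb{T}^{1}\times\{w\in E_{2}:\|w\|_{E}\leq R(\omega)\}$ with $R(\omega)<\infty$ is automatically compact; so I only need to produce a set that is bounded in the $E_{2}$-direction and absorbing. Because $p_0=\kappa\eta_0\in E_{1}$, we have $Qp_0=0$, so the $E_{2}$-component of the flow is unchanged by reduction mod $p_0$; hence every estimate can be carried out for the lifted system $Y$ on $E$ and then transported to $\mathbf{E}$ via the remark after Definition \ref{pseudo-ball-tempered}.

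First I would estimate the $E_{2}$-component. Applying $Q$ to the variation-of-constants formula \eqref{integral-equa}, using that $E_{2}$ is $C$-invariant so that $Q$ commutes with $e^{Ct}$, together with Lemma \ref{lemma-3}(2), gives
\begin{equation*}
\|QY(t,\omega,Y_0)\|_E \leq e^{-at}\|QY_0\|_E + \int_0^t e^{-a(t-s)}\|QF(\theta_s\omega, Y(s,\omega,Y_0))\|_E\,ds.
\end{equation*}
The crucial observation is that $F(\theta_s\omega,Y)=(z(\theta_s\omega),\,f-\beta g(u)+(1-\alpha)z(\theta_s\omega))^{\top}$ is bounded in $Y$ up to the noise term, since $|g|\leq c_1$ by {\bf (HG)}. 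Hence, using also the norm equivalence from Lemma \ref{lemma-4}(2), there are constants $M_1,M_2>0$ (depending only on $f,\beta,c_1,\alpha$) with $\|QF(\theta_s\omega,Y)\|_E\leq M_1+M_2\|z(\theta_s\omega)\|$ for all $Y$.

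Next I would run the pullback argument. Replacing $\omega$ by $\theta_{-t}\omega$, substituting $\tau=s-t$, and using $a>0$ gives
\begin{equation*}
\|QY(t,\theta_{-t}\omega,Y_0)\|_E \leq e^{-at}\|QY_0\|_E + \int_{-t}^0 e^{a\tau}\big(M_1 + M_2\|z(\theta_\tau\omega)\|\big)\,d\tau.
\end{equation*}
For $Y_0$ ranging over a lift of a pseudo-tempered random set, the first term tends to $0$ as $t\to\infty$ by temperedness and $a>0$. For the integral, Lemma \ref{tempered-random-variable} with $\epsilon<a$ gives $\|z(\theta_\tau\omega)\|\leq e^{-\epsilon\tau}\tilde{r}(\omega)$ for $\tau\leq 0$, so the integrand is dominated by $e^{(a-\epsilon)\tau}$ times a random constant, and the improper integral
\begin{equation*}
R(\omega):= 1 + \int_{-\infty}^0 e^{a\tau}\big(M_1 + M_2\|z(\theta_\tau\omega)\|\big)\,d\tau
\end{equation*}
converges; a routine computation using \eqref{2k} shows that $R(\omega)$ is tempered. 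This yields a random pseudo-ball $B(\omega)=\{b\in E:\|Qb\|_E\leq R(\omega)\}$ absorbing every pseudo-tempered random set, and by the remark after Definition \ref{pseudo-ball-tempered}, $\mathbf{B}(\omega):=B(\omega)\,(mod\,p_0)$ is a tempered random set in $\mathbf{E}$ absorbing every tempered random set under $\mathbf{Y}$.

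Finally I would note that $\mathbf{B}(\omega)=\mathbb{T}^{1}\times\{w\in E_{2}:\|w\|_E\leq R(\omega)\}$ is compact, being the product of the compact circle $\mathbb{T}^{1}$ with a closed bounded subset of the finite-dimensional space $E_{2}$. Thus $\mathbf{B}$ is a random compact attracting set, and Theorem \ref{existence-random-attractor1} produces the global random attractor $\omega\mapsto\mathbf{A_0}(\omega)$. I expect the main obstacle to be purely technical: verifying that the improper integral defining $R(\omega)$ converges and is tempered, which hinges on choosing $\epsilon<a$ in Lemma \ref{tempered-random-variable}; the conceptual difficulty of the unbounded $E_1$-direction has already been removed by passing to the compact torus $\mathbb{T}^{1}$, where compactness of the absorbing set comes for free in finite dimensions.
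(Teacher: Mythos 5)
Your proposal is correct and follows essentially the same route as the paper's proof: project the variation-of-constants formula onto $E_2$, use the decay estimate of Lemma \ref{lemma-3}(2) together with the boundedness of $g$ and the tempered bound on $z(\theta_t\omega)$ from Lemma \ref{tempered-random-variable} to obtain pullback absorption into a tempered random pseudo-ball, then reduce mod $p_0$ to get a compact absorbing set in $\mathbf{E}$ and invoke Theorem \ref{existence-random-attractor1}. The only differences are cosmetic (the paper fixes $\epsilon=\tfrac{a}{2}$ and computes the absorbing radius explicitly as $\tfrac{4a_1}{a}\tilde{r}(\omega)+\tfrac{2a_2}{a}$, whereas you take a generic $\epsilon<a$ and define the radius by an improper integral), so no comparison beyond this is needed.
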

\begin{proof}
For $\omega\in\Omega$, we obtain from \eqref{integral-equa} that
\begin{equation}\label{3a}
Y(t,\omega,Y_0(\omega))=e^{Ct}Y_0(\omega)+\int_{0}^{t}e^{C(t-s)}F(\theta_s\omega,Y(s,\omega,Y_0(\omega)))ds.
\end{equation}
The projection of \eqref{3a} on $E_2$ is
\begin{equation}\label{3b}
QY(t,\omega,Y_0(\omega))=e^{Ct}QY_0(\omega)+\int_{0}^{t}e^{C(t-s)}QF(\theta_s\omega,Y(s,\omega,Y_0(\omega)))ds.
\end{equation}
By replacing $\omega$ by $\theta_{-t}\omega$, it follows from
\eqref{3b} that
\begin{equation*}
QY(t,\theta_{-t}\omega,Y_0(\theta_{-t}\omega))=e^{Ct}QY_0(\theta_{-t}\omega)+\int_{0}^{t}e^{C(t-s)}QF(\theta_{s-t}\omega,Y(s,\theta_{-t}\omega,Y_0(\theta_{-t}\omega)))ds.
\end{equation*}
It then follows from Lemma \ref{lemma-3} and $Q^2=Q$ that
\begin{equation}
\begin{split}
&\|QY(t,\theta_{-t}\omega,Y_0(\theta_{-t}\omega))\|_{E}\\
&\quad\quad\leq
e^{-at}\|QY_0(\theta_{-t}\omega)\|_{E}+\int_{0}^{t}e^{-a(t-s)}\|F(\theta_{s-t}\omega,Y(s,\theta_{-t}\omega,Y_0(\theta_{-t}\omega)))\|_{E}ds.
\end{split}\label{3c}
\end{equation}
By Lemma \ref{tempered-random-variable} with $\epsilon=\frac{a}{2}$
and the equivalence of $\|\cdot\|_{E}$ and $\|\cdot\|$ on $E$, there
is a $M_1>0$ such that
\begin{equation*}
\begin{split}
&\|F(\theta_{s-t}\omega,Y(s,\theta_{-t}\omega,Y_0(\theta_{-t}\omega)))\|_{E}\\
&\quad\quad\leq M_1\|F(\theta_{s-t}\omega,Y(s,\theta_{-t}\omega,Y_0(\theta_{-t}\omega)))\|\\
&\quad\quad=M_1\Big(\|z(\theta_{s-t}\omega)\|^2+\|f-\beta g(Y_u(s,\theta_{-t}\omega))+(1-\alpha)z(\theta_{s-t}\omega)\|^2\Big)^{\frac{1}{2}}\\
&\quad\quad\leq M_1\Big((3\alpha^2-6\alpha+4)\|z(\theta_{s-t}\omega)\|^2+3\|f\|^2+3\beta^2c_1^2N^d\Big)^{\frac{1}{2}}\\
&\quad\quad\leq a_1e^{\frac{a}{2}(t-s)}\tilde{r}(\omega)+a_2,
\end{split}
\end{equation*}
where $Y_u$ satisfies
$Y(s,\theta_{-t}\omega,Y_0(\theta_{-t}\omega))=(Y_u(s,\theta_{-t}\omega),Y_v(s,\theta_{-t}\omega))^{\top}$,
$a_1=M_1\sqrt{3\alpha^2-6\alpha+4}$ and
$a_2=M_1\sqrt{3\|f\|^2+3\beta^2c_1^2N^d}$. We then find from
\eqref{3c} that
\begin{equation*}
\|QY(t,\theta_{-t}\omega,Y_0(\theta_{-t}\omega))\|_{E}\leq
e^{-at}\|QY_0(\theta_{-t}\omega)\|_{E}+\frac{2a_1}{a}(1-e^{-\frac{a}{2}t})\tilde{r}(\omega)+\frac{a_2}{a}(1-e^{-at}).
\end{equation*}
Now for $\omega\in\Omega$, define
\begin{equation*}
R_0(\omega)=\frac{4a_1}{a}\tilde{r}(\omega)+\frac{2a_2}{a}.
\end{equation*}
Then, for any pseudo-tempered random set $\omega\mapsto B(\omega)$
in $E$ and any $Y_0(\theta_{-t}\omega)\in B(\theta_{-t}\omega)$,
there is a $T_{B}(\omega)>0$ such that for $t\geq T_{B}(\omega)$,
\begin{equation*}
\|QY(t,\theta_{-t}\omega,Y_0(\theta_{-t}\omega))\|_{E}\leq
R_0(\omega),\,\,\omega\in\Omega,
\end{equation*}
which implies
\begin{equation*}
Y(t,\theta_{-t}\omega,B(\theta_{-t}\omega))\subset B_0(\omega)\quad
\text{for all}\quad t\geq T_{B}(\omega),\,\,\omega\in\Omega,
\end{equation*}
where $\omega\mapsto B_0(\omega)$ is the random pseudo-ball centered
at origin with random radius $\omega\mapsto R_0(\omega)$. Note that
$\omega\mapsto R_0(\omega)$ is a random variable since
$\omega\mapsto\tilde{r}(\omega)$ is a random variable, then the
measurability of random pseudo-ball $\omega\mapsto B_0(\omega)$ is
trivial from Definition \ref{pseudo-ball-tempered}.

For $\omega\in\Omega$, let
$\mathbf{B}(\omega)=B(\omega)\,\,(mod\,p_0)$ and
$\mathbf{B_0}(\omega)=B_0(\omega)\,\,(mod\,p_0)$, we then have
\begin{equation*}
\mathbf{Y}(t,\theta_{-t}\omega,\mathbf{B}(\theta_{-t}\omega))\subset
\mathbf{B_0}(\omega)\quad \text{for all}\quad t\geq
T_{\mathbf{B}}(\omega),\,\,\omega\in\Omega,
\end{equation*}
where $T_{\mathbf{B}}(\omega)=T_{B}(\omega)$ for $\omega\in\Omega$,
i.e., $\omega\mapsto \mathbf{B_0}(\omega)$ is the random absorbing
set of $\mathbf{Y}$. Moreover, $\omega\mapsto \mathbf{B_0}(\omega)$
is bounded and closed, hence compact in $\mathbf{E}$, it then
follows from Theorem \ref{existence-random-attractor1} that
$\mathbf{Y}$ has a global random attractor
$\omega\mapsto\mathbf{A_0}(\omega)$, where
\begin{equation*}
\mathbf{A_0}(\omega)=\bigcap_{t>0}\overline{\bigcup_{\tau\geq
t}\mathbf{Y}(\tau,\theta_{-\tau}\omega,\mathbf{B_0}(\theta_{-\tau}\omega))},\quad\omega\in\Omega.
\end{equation*}
This completes the proof.
\end{proof}

\begin{corollary}\label{existence-random-attractor-corollary}
Let $\alpha>0$ and $K>0$. Then the induced random dynamical system
$\mathbf{\Phi}$ defined in \eqref{induced-SDE-RDS} has a global random
attractor $\omega\mapsto \mathbf{A}(\omega)$, where
$\mathbf{A}(\omega)=\mathbf{A_0}(\omega)+\tilde{z}(\omega)\,\,(mod\,p_0)$
for all $\omega\in\Omega$.
\end{corollary}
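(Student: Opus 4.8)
The plan is to exploit the explicit relation \eqref{induced-SDE-RDS} between $\mathbf{\Phi}$ and $\mathbf{Y}$, which exhibits the two random dynamical systems as conjugate through the random translation by $\tilde z$. Concretely, for $\omega\in\Omega$ define $h(\omega):\mathbf{E}\to\mathbf{E}$ by $h(\omega)\mathbf{X}=\mathbf{X}+\tilde z(\omega)\,(\mathrm{mod}\,p_0)$, with inverse $h(\omega)^{-1}\mathbf{X}=\mathbf{X}-\tilde z(\omega)\,(\mathrm{mod}\,p_0)$. Since translation is an isometry both on $E_2$ and on the circle $\mathbb{T}^1$, the map $h(\omega)$ is a metric-preserving homeomorphism of $\mathbf{E}$ depending measurably on $\omega$, and \eqref{induced-SDE-RDS} reads
\begin{equation*}
\mathbf{\Phi}(t,\omega,\cdot)=h(\theta_t\omega)\circ\mathbf{Y}(t,\omega,\cdot)\circ h(\omega)^{-1},\quad t\geq0,\ \omega\in\Omega.
\end{equation*}
I would then set $\mathbf{A}(\omega)=h(\omega)\mathbf{A_0}(\omega)=\mathbf{A_0}(\omega)+\tilde z(\omega)\,(\mathrm{mod}\,p_0)$ and verify the three defining properties of a global random attractor by transporting them from $\mathbf{A_0}$ (which is a global random attractor by Theorem \ref{existence-random-attractor2}) through $h$.

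Measurability and compactness are immediate: $h(\omega)$ is a homeomorphism depending measurably on $\omega$, so $\omega\mapsto\mathbf{A}(\omega)$ is a random compact set because $\omega\mapsto\mathbf{A_0}(\omega)$ is. For invariance, the conjugacy together with the invariance of $\mathbf{A_0}$ under $\mathbf{Y}$ gives
\begin{equation*}
\mathbf{\Phi}(t,\omega,\mathbf{A}(\omega))=h(\theta_t\omega)\mathbf{Y}(t,\omega,\mathbf{A_0}(\omega))=h(\theta_t\omega)\mathbf{A_0}(\theta_t\omega)=\mathbf{A}(\theta_t\omega).
\end{equation*}

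For the attracting property, let $\mathbf{D}$ be any tempered random set in $\mathbf{E}$ and put $\tilde{\mathbf{D}}(\omega)=h(\omega)^{-1}\mathbf{D}(\omega)$. Replacing $\omega$ by $\theta_{-t}\omega$ in the conjugacy and using $\theta_t\theta_{-t}=\mathrm{id}$ yields $\mathbf{\Phi}(t,\theta_{-t}\omega,\mathbf{D}(\theta_{-t}\omega))=h(\omega)\mathbf{Y}(t,\theta_{-t}\omega,\tilde{\mathbf{D}}(\theta_{-t}\omega))$, so that, $h(\omega)$ being an isometry,
\begin{equation*}
d_H\big(\mathbf{\Phi}(t,\theta_{-t}\omega,\mathbf{D}(\theta_{-t}\omega)),\mathbf{A}(\omega)\big)=d_H\big(\mathbf{Y}(t,\theta_{-t}\omega,\tilde{\mathbf{D}}(\theta_{-t}\omega)),\mathbf{A_0}(\omega)\big),
\end{equation*}
and the right-hand side tends to $0$ by the attracting property of $\mathbf{A_0}$, provided $\tilde{\mathbf{D}}$ is again tempered in $\mathbf{E}$. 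This last point is the only step that needs genuine work and is the main obstacle: one must check that translating a tempered set by $-\tilde z(\omega)$ preserves temperedness. It follows from Lemma \ref{tempered-random-variable}: since $\mathbb{T}^1$ is bounded, temperedness in $\mathbf{E}$ concerns only the $E_2$-component, and $\|Q\tilde z(\theta_{-t}\omega)\|_E\leq C\|z(\theta_{-t}\omega)\|\leq Ce^{\epsilon t}\tilde r(\omega)$; choosing $\epsilon<\beta$ for any prescribed $\beta>0$ makes $e^{-\beta t}\sup\{\|Qb\|_E:b\in\tilde{\mathbf{D}}(\theta_{-t}\omega)\}\to0$, as required.

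Finally, I would remark that the same conclusion can be read directly off Theorem \ref{existence-random-attractor1}: the image $h(\omega)\mathbf{B_0}(\omega)$ of the compact absorbing pseudo-ball $\mathbf{B_0}$ of $\mathbf{Y}$ is a compact absorbing set for $\mathbf{\Phi}$ (its radius increases only by the tempered quantity $\|Q\tilde z(\omega)\|_E$), and since $h(\omega)$ commutes with closures, unions and intersections, the attractor formula of Theorem \ref{existence-random-attractor1} reproduces exactly $\mathbf{A}(\omega)=\mathbf{A_0}(\omega)+\tilde z(\omega)\,(\mathrm{mod}\,p_0)$.
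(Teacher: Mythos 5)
Your proposal is correct and follows essentially the same route as the paper: the paper's proof is just the one-line observation that the corollary ``follows from \eqref{induced-SDE-RDS} and Theorem \ref{existence-random-attractor2},'' i.e., precisely the conjugacy of $\mathbf{\Phi}$ and $\mathbf{Y}$ by the random translation $\tilde{z}(\omega)\,(\mathrm{mod}\,p_0)$ that you make explicit. Your write-up simply supplies the details the paper leaves implicit (isometry of the translation, invariance, and the preservation of temperedness via Lemma \ref{tempered-random-variable}), all of which are verified correctly.
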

\begin{proof}
It follows from \eqref{induced-SDE-RDS} and Theorem
\ref{existence-random-attractor2}.
\end{proof}

\begin{remark}\label{existence-attractor-rk}
\begin{itemize}
\item[(1)] For any $\alpha>0$ and
$K>0$, there is a $\delta\in (0,1]$ such that $\alpha^2>2\delta
K\lambda_1$ which implies $a>0$, where $a$ is as in \eqref{a-eq} and
$\lambda_1$ is the smallest positive eigenvalue of $A$.

\vspace{-0.1in}\item[(2)] We  say that the random dynamical system $Y$(or $\phi$) has a global
random attractor in the sense that the induced random dynamical
system $\mathbf{Y}$(or $\mathbf{\Phi}$) has a global random attractor, and
we will say that $Y$(or $\phi$) has a global random attractor directly in
the sequel. We denote the global random attractor of $Y$ and $\phi$ by
$\omega\mapsto A_0(\omega)$ and $\omega\mapsto A(\omega)$
respectively. Indeed, $\omega\mapsto A_0(\omega)$ and $\omega\mapsto
A(\omega)$ satisfy
\[
\begin{split}
\mathbf{A_0}(\omega)=A_0(\omega)\,\,(mod\,p_0),\quad
\mathbf{A}(\omega)=A(\omega)\,\,(mod\,p_0),\quad\omega\in\Omega.
\end{split}
\]
Hence a global random attractor of $Y$ (or $\phi$) is unbounded along the one-dimensional space $E_1$ and
bounded along the one-codimensional space $E_2$.

\vspace{-0.1in}\item[(3)] Observe the global attractors of many dissipative systems
related to \eqref{main-eq}  is one-dimensional (see
\cite{CSZ,QSZ1,QSZ2,QQWZ,QQZ,QZZ,S,SZS,WaZh1}). Similarly, we expect
that the random attractor $\omega\mapsto A(\omega)$ of $\phi$ is
one-dimensional for each $\omega\in\Omega$ provided that $\alpha$ is
sufficiently large. We prove that this is true in the next section.

\vspace{-0.1in}\item[(4)] By (2), the system \eqref{main-eq} is dissipative along $E_2$
(i.e. it possesses a global random attractor which is bounded along $E_2$). In  section 6, we will show
that \eqref{main-eq} with sufficiently large $\alpha$ and $K$ also
has a rotation number and hence all the solutions tend to oscillate
with the same frequency eventually.
\end{itemize}
\end{remark}

\section{One-dimensional Random Attractor}
In this section, we apply the invariant and inertial manifold theory, in particular,   the theory established in \cite{CLL} to
show that the random attractor of the random dynamical system $\phi$
generated by \eqref{main-eq} is one-dimensional (more precisely, is a horizontal curve)
provided that
$\alpha$ and $K$ are sufficiently large (see Remark
\ref{existence-attractor-rk} (2) for the random attractor). This
method has been applied by Chow, Shen and Zhou \cite{CSZ} to systems of
coupled first order noisy oscillators and by Shen, Zhou and Shen \cite{SZS} to
the stochastic damped sine-Gordon equation.
The reader is referred to \cite{BeFl, ChGi} for the theory and application of inertial manifold theory
for stochastic evolution equations.

Assume that
$p_0=\kappa\eta_0$ and $a>4L_F$ (see \eqref{a-eq} for $a$ and
\eqref{lipschitz-constant} for $L_F$).
Note that the condition $a>4L_F$ indicates that the exponential decay rate of
$e^{Ct}|_{QE}$ in the norm $\|\cdot\|_E$ is larger than four times the Lipschitz
constant of $F$ in the norm $\|\cdot\|_E$. It will be seen at the end of this section
that  the condition $a>4L_F$
can be satisfied provided that $\alpha$ and $K$ are sufficiently
large.

\begin{definition}\label{random-horizontal-curve-def}
Suppose $\{\Phi^{\omega}\}_{\omega\in\Omega}$ is a family of maps
from $E_1$ to $E_2$ and $n\in\mathbb{N}$. A family of graphs
$\omega\mapsto \ell(\omega)\equiv\{(p,\Phi^{\omega}(p)):p\in
E_{1}\}$ is said to be a {\rm  random $np_0$-period horizontal
curve} if $\omega\mapsto \ell(\omega)$ is a random set and
$\{\Phi^{\omega}\}_{\omega\in\Omega}$ satisfy the Lipshitz condition
\[
\begin{split}
\|\Phi^{\omega}(p_1)-\Phi^{\omega}(p_2)\|_{E}\leq
\|p_1-p_2\|_{E}\quad\text{for all}\quad p_1,p_2\in
E_1,\,\,\omega\in\Omega
\end{split}
\]
and the periodic condition
\[
\begin{split}
\Phi^{\omega}(p+np_0)=\Phi^{\omega}(p)\quad\text{for all}\quad p\in
E_1,\,\,\omega\in\Omega.
\end{split}
\]
\end{definition}

Note that for any $\omega\in\Omega$, $\ell(\omega)$ is a
deterministic $np_0$-period horizontal curve. When $n=1$, we simply
call it a horizontal curve.

\begin{lemma}\label{invariance-of-rand-hori-curv}
Let $a>4L_F$. Suppose that $\omega\mapsto \ell(\omega)$ is a random
$np_0$-period horizontal curve in $E$. Then, $\omega\mapsto
Y(t,\omega,\ell(\omega))$ is also a random $np_0$-period horizontal
curve in $E$ for all $t>0$. Moreover, $\omega\mapsto
Y(t,\theta_{-t}\omega,\ell(\theta_{-t}\omega))$ is a random
$np_0$-period horizontal curve for all $t>0$.
\end{lemma}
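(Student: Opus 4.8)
The plan is to show that the time-$t$ map of $Y$ carries the graph $\ell(\omega)=\{(p,\Phi^\omega(p)):p\in E_1\}$ to another graph of the same type, via a cone-invariance argument in the equivalent norm $\|\cdot\|_{E}$. Fix $\omega$ and $t>0$, take two points $Y_0^{(i)}=(p_i,\Phi^\omega(p_i))$, $i=1,2$, write $Y_i(s)=Y(s,\omega,Y_0^{(i)})$, and set $\Delta Y=Y_1-Y_2$, $\Delta P=P\Delta Y\in E_1$, $\Delta Q=Q\Delta Y\in E_2$. Subtracting the two copies of \eqref{matrixRDE} and projecting with $P$ and $Q$, and using $C\Delta P=0$ together with the $C$-invariance of $E_2$ (so $PC\Delta Y=0$ and $QC\Delta Y=C\Delta Q$), I obtain
\begin{equation*}
\dot{\Delta P}=P[F(\theta_s\omega,Y_1)-F(\theta_s\omega,Y_2)],\qquad \dot{\Delta Q}=C\Delta Q+Q[F(\theta_s\omega,Y_1)-F(\theta_s\omega,Y_2)].
\end{equation*}

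Next comes the cone estimate. Using $E_1\perp E_2$ (Lemma \ref{lemma-4}(3)), so that $\|\Delta Y\|_{E}^2=\|\Delta P\|_{E}^2+\|\Delta Q\|_{E}^2$, the dissipativity bound $\langle C\Delta Q,\Delta Q\rangle_{E}\le-a\|\Delta Q\|_{E}^2$ (Lemma \ref{lemma-3}(1)), and the Lipschitz estimate $\|F(\theta_s\omega,Y_1)-F(\theta_s\omega,Y_2)\|_{E}\le L_F\|\Delta Y\|_{E}$ (Lemma \ref{lemma-4}(4)), I would derive
\begin{equation*}
\frac{d}{ds}\|\Delta P\|_{E}^2\ge-2L_F\|\Delta P\|_{E}\|\Delta Y\|_{E},\qquad \frac{d}{ds}\|\Delta Q\|_{E}^2\le-2a\|\Delta Q\|_{E}^2+2L_F\|\Delta Q\|_{E}\|\Delta Y\|_{E}.
\end{equation*}
Set $\psi(s)=\|\Delta Q(s)\|_{E}^2-\|\Delta P(s)\|_{E}^2$. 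Whenever $\psi(s)=0$ with $r:=\|\Delta P\|_{E}=\|\Delta Q\|_{E}>0$ one has $\|\Delta Y\|_{E}=\sqrt2\,r$, and the two inequalities give $\dot\psi\le-2(a-2\sqrt2\,L_F)r^2<0$, since $a>4L_F>2\sqrt2\,L_F$. Hence the closed horizontal cone $\{\psi\le0\}$ is positively invariant. Because the initial data lie in it, namely $\|\Phi^\omega(p_1)-\Phi^\omega(p_2)\|_{E}\le\|p_1-p_2\|_{E}$, I conclude $\|\Delta Q(t)\|_{E}\le\|\Delta P(t)\|_{E}$ for all $t\ge0$; moreover inside the cone $\|\Delta Y\|_{E}\le\sqrt2\,\|\Delta P\|_{E}$, so the first inequality and Gronwall yield $\|\Delta P(t)\|_{E}\ge\|\Delta P(0)\|_{E}\,e^{-\sqrt2\,L_F t}$.

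This last estimate shows that $g_t^\omega(p):=PY(t,\omega,(p,\Phi^\omega(p)))$ is injective on $E_1$; since $E_1$ is one-dimensional and $g_t^\omega$ is continuous, it is strictly monotone. By the translation invariance of Lemma \ref{translation-invariant} and the $np_0$-periodicity of $\Phi^\omega$, one checks $g_t^\omega(p+np_0)=g_t^\omega(p)+np_0$, so $g_t^\omega$ is an increasing homeomorphism of $E_1\cong\RR$ onto itself. Consequently $Y(t,\omega,\ell(\omega))$ is the graph of $\tilde\Phi^\omega$, where $\tilde\Phi^\omega(q)=QY(t,\omega,(p,\Phi^\omega(p)))$ with $p=(g_t^\omega)^{-1}(q)$. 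The required Lipschitz bound $\|\tilde\Phi^\omega(q_1)-\tilde\Phi^\omega(q_2)\|_{E}\le\|q_1-q_2\|_{E}$ is precisely the cone inequality $\|\Delta Q(t)\|_{E}\le\|\Delta P(t)\|_{E}$ read at $q_i=g_t^\omega(p_i)$, and the $np_0$-periodicity of $\tilde\Phi^\omega$ follows from $(g_t^\omega)^{-1}(q+np_0)=(g_t^\omega)^{-1}(q)+np_0$, the periodicity of $\Phi^\omega$, Lemma \ref{translation-invariant}, and $Q(np_0)=0$. Measurability of $\omega\mapsto Y(t,\omega,\ell(\omega))$ as a random set follows from the joint measurability of $Y$ in \eqref{RDE-RDS} together with $\omega\mapsto\ell(\omega)$ being a random set. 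The ``moreover'' statement is then obtained by applying the first part with $\omega$ replaced by $\theta_{-t}\omega$, the Lipschitz and periodicity conditions holding for each fixed $\omega$ and measurability being preserved since $\omega\mapsto\theta_{-t}\omega$ is measurable.

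The crux is the cone-invariance computation combined with the non-collapse estimate $\|\Delta P(t)\|_{E}\ge\|\Delta P(0)\|_{E}e^{-\sqrt2\,L_F t}$, which is what promotes an invariant cone condition into a genuine graph over $E_1$; the hypothesis $a>4L_F$ enters exactly to force $\dot\psi<0$ on the cone boundary. The remaining verifications (Lipschitz constant $\le1$, periodicity, and measurability) are routine once these two facts are in hand.
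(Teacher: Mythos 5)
Your proposal is correct and takes essentially the same route as the paper's intended proof: the paper omits the argument and refers to Lemma 4.2 of \cite{SZS}, whose proof is exactly this cone-invariance scheme --- preservation of the horizontal cone $\|Q\Delta Y\|_E\le\|P\Delta Y\|_E$ via the dissipativity of $C$ on $E_2$ against the Lipschitz bound $L_F$ (using $a>4L_F$), a non-collapse estimate making $p\mapsto PY(t,\omega,p+\Phi^\omega(p))$ a monotone bijection of $E_1$, and the $p_0$-translation invariance of Lemma \ref{translation-invariant} for periodicity and surjectivity. Your added Gronwall lower bound $\|\Delta P(t)\|_E\ge\|\Delta P(0)\|_E e^{-\sqrt2 L_F t}$ and the measurability remarks fill in the same routine steps that reference supplies, so there is no gap.
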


The proof of Lemma 5.2 is similar to that of Lemma 4.2 in \cite{SZS}.
We hence omit it here.

Choose $\gamma\in(0,\frac{a}{2})$ such that
\begin{equation}\label{4c}
\frac{2c_2|\beta|}{\alpha}\Bigg(\frac{1}{\gamma}+\frac{1}{a-2\gamma}\Bigg)<1,
\end{equation}
where $\frac{2c_2|\beta|}{\alpha}$ is the Lipschitz constant of $F$
(see \eqref{lipschitz-constant}). We remark at the end of this
section that such a $\gamma$ exists provided that $\alpha$ and $K$
are sufficiently large. The main result in this section is as
follows.

\begin{theorem}\label{one-dimension-thm}
Assume that $a>4L_F$ and there is $\gamma\in(0,\frac{a}{2})$ such
that \eqref{4c} holds. Then the global random attractor $\omega\mapsto
A_0(\omega)$ of the random dynamical system $Y$ is a random
horizontal curve.
\end{theorem}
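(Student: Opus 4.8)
The plan is to prove that the global random attractor $\omega\mapsto A_0(\omega)$ coincides with a random horizontal curve by constructing that curve as a limit of forward iterates of an arbitrary initial horizontal curve, and then showing the attractor is squeezed onto it. The key structural tools are already in place: Lemma \ref{invariance-of-rand-hori-curv} tells us that the class of random $np_0$-period horizontal curves is invariant under the cocycle $Y$, and Lemma \ref{lemma-3} gives the exponential decay $\|e^{Ct}Q\|_E\le e^{-at}$ together with invariance on $E_1$. The hypothesis $a>4L_F$ and the choice of $\gamma$ satisfying \eqref{4c} are precisely the spectral-gap conditions needed to apply the inertial/invariant manifold machinery of \cite{CLL}. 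So the overarching strategy is to realize $A_0(\omega)$ as the graph of a Lipschitz map $\Phi^\omega:E_1\to E_2$ with Lipschitz constant $\le 1$ and period $p_0$.

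First I would fix an arbitrary random $p_0$-period horizontal curve $\omega\mapsto\ell_0(\omega)$ (for instance the graph of the zero map, which trivially satisfies the Lipschitz and periodicity conditions) and consider the pulled-back iterates $\omega\mapsto Y(t,\theta_{-t}\omega,\ell_0(\theta_{-t}\omega))$. By Lemma \ref{invariance-of-rand-hori-curv} each of these is again a random $p_0$-period horizontal curve, so it is the graph of some Lipschitz map $\Phi_t^\omega:E_1\to E_2$ with Lipschitz constant $\le 1$. The heart of the argument is to show that these graphs form a Cauchy family as $t\to\infty$ in an appropriate sense: using the variation-of-constants formula \eqref{integral-equa}, projecting onto $E_2$ via $Q$, and applying the decay estimate from Lemma \ref{lemma-3}(2) together with the Lipschitz bound \eqref{lipschitz-constant} on $F$, one estimates the $E_2$-distance between two iterates started from different curves. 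The gap condition \eqref{4c} makes the resulting contraction factor strictly less than $1$, so the maps $\Phi_t^\omega$ converge uniformly on bounded sets (by $p_0$-periodicity it suffices to control them on one fundamental domain of $E_1$) to a limiting Lipschitz map $\Phi^\omega$ of Lipschitz constant $\le 1$ and period $p_0$. Measurability of $\omega\mapsto\ell(\omega)\equiv\{(p,\Phi^\omega(p)):p\in E_1\}$ as a random set is inherited from the measurability of the cocycle $Y$ in $(t,\omega,Y_0)$ and preserved under the limit.

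Next I would identify this limiting curve $\omega\mapsto\ell(\omega)$ with the attractor. Because $\ell(\omega)$ is built as the forward limit of pulled-back iterates, it is invariant under $Y$ in the cocycle sense; and since $\ell(\omega)$ has bounded $E_2$-component (it is a graph over $E_1$ with the $Q$-projection lying in the absorbing pseudo-ball from Theorem \ref{existence-random-attractor2}), passing to the quotient $\mathbf E$ makes $\ell(\omega)\,(\mathrm{mod}\,p_0)$ a compact invariant set that attracts every pseudo-tempered random set. By the characterization in Remark \ref{existence-attractor-rk}(2) together with uniqueness of the global random attractor of $\mathbf Y$ from Theorem \ref{existence-random-attractor2}, this forces $\ell(\omega)\,(\mathrm{mod}\,p_0)=\mathbf{A_0}(\omega)$, hence $A_0(\omega)=\ell(\omega)$, a random horizontal curve.

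The main obstacle I anticipate is the convergence/contraction estimate itself: one must carefully track how the Lipschitz constant of the graph maps propagates under the nonlinear flow and verify that the two contributions $\frac{1}{\gamma}$ and $\frac{1}{a-2\gamma}$ in \eqref{4c} arise correctly from splitting the variation-of-constants integral according to the exponential weight $e^{\pm\gamma s}$ (the standard Lyapunov–Perron cone-invariance device). Getting the horizontal cone to be strictly forward-invariant with contraction requires exactly $a>4L_F$ to guarantee the existence of an admissible $\gamma$, and I expect the bulk of the technical work to lie in showing that the slope (Lipschitz constant) of $Y(t,\omega,\ell(\omega))$ stays $\le 1$ while transverse contraction drives the graphs together. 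Since Lemma \ref{invariance-of-rand-hori-curv} already encapsulates the slope-preservation step (its proof being parallel to \cite{SZS}), I would lean on the framework of \cite{CLL} to supply the uniform contraction and thereby the convergence, rather than rederiving the cone estimates from scratch.
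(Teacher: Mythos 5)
Your overall strategy (a graph-transform / pullback-contraction argument on the space of random horizontal curves) is a legitimate alternative to the paper's route. The paper instead constructs, for each $\omega$, a one-dimensional invariant manifold $W(\omega)=\{\xi+h(\omega,\xi):\xi\in E_1\}$ by the Lyapunov--Perron method (Theorem 3.3 of \cite{CLL}), and then --- crucially --- a stable foliation $\{W_s(Y_0,\omega):Y_0\in W(\omega)\}$ through \emph{every} point of phase space (Theorem 3.4 of \cite{CLL}, equations \eqref{foliation-equation}--\eqref{another-contraction-property}). That foliation is what allows the paper to show that every pseudo-tempered random set is attracted, in the pullback sense, to $W(\omega)$, whence $A_0(\omega)=W(\omega)$; convergence of pulled-back horizontal curves together with Lemma \ref{invariance-of-rand-hori-curv} is used only at the very end, to conclude that this set is itself a horizontal curve.

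The genuine gap in your proposal is the identification step. Your construction produces a limit curve $\ell(\omega)$ that is invariant and, $(\mathrm{mod}\,p_0)$, compact and tempered; this yields only the inclusion $\ell(\omega)\,(\mathrm{mod}\,p_0)\subset\mathbf{A_0}(\omega)$, since a compact invariant tempered random set is always contained in the global random attractor. The reverse inclusion $\mathbf{A_0}(\omega)\subset\ell(\omega)\,(\mathrm{mod}\,p_0)$ is precisely the assertion that $\ell$ \emph{attracts}, and you state that $\ell(\omega)\,(\mathrm{mod}\,p_0)$ ``attracts every pseudo-tempered random set'' without supplying any mechanism for it: compactness plus invariance does not imply attraction, and your Cauchy estimate only controls distances between pulled-back images of \emph{horizontal curves}. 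An arbitrary pseudo-tempered set --- in particular the attractor itself, which a priori need not be a graph over $E_1$ --- is not of this form, so nothing you have proved forces points off the curve to approach it. To close the gap within your framework you would need a transverse contraction valid for arbitrary initial data, for instance: (i) establish the graph-transform contraction \emph{uniformly} over all horizontal curves lying in a common pseudo-tempered family, and (ii) observe that every point $Y_0$ of a pseudo-tempered set lies on the flat horizontal curve $\{p+QY_0:p\in E_1\}$, so attraction of curves implies attraction of sets. This uniform-plus-covering argument is the graph-transform substitute for the paper's stable foliation; without it (or the foliation), your argument shows only that the attractor \emph{contains} a random horizontal curve, not that it \emph{equals} one.
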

\begin{proof}
Since equation \eqref{matrixRDE} can be viewed as a deterministic
system with a random parameter $\omega\in\Omega$, we write it here
as \eqref{matrixRDE}$_{\omega}$ for  $\omega\in\Omega$. We
first show that for any fixed $\omega\in\Omega$,
 \eqref{matrixRDE}$_{\omega}$ has a one-dimensional
attracting invariant manifold $W(\omega)$.

In order to do so, for fixed $\omega\in\Omega$, let
$$
W(\omega)=\{Y_0\in E\,|\, Y(t,\omega,Y_0)\,\,\,\text{exists for}\,\,\, t\leq 0\,\,\,\text{and}\,\,\, \sup_{t\leq 0}\|e^{\gamma t}Y(t,\omega,Y_0)\|_E<\infty\}.
$$
We prove that $W(\omega)$ is a one-dimensional
attracting invariant manifold of \eqref{matrixRDE}$_{\omega}$.

First of all, by the definition of $W(\omega)$,  it is clear that  for any $t\in\RR$,
$$
Y(t,\omega,W(\omega))=W(\theta_t\omega),
$$
that is, $\{W(\omega)\}_{\omega\in\Omega}$ is invariant.
By the variation of constant formula, $Y_0\in W(\omega)$ if and only if there is $\tilde Y(t)$ with
$\tilde Y(0)=Y_0$, $\sup_{t\leq 0}\|e^{\gamma t}\tilde Y(t)\|_E<\infty$,
\begin{equation}\label{centered-manifold-equation}
\tilde{Y}(t)=e^{Ct}\xi+\int_{0}^{t}e^{C(t-s)}PF^{\omega}(s,\tilde{Y}(s))ds+\int_{-\infty}^{t}e^{C(t-s)}QF^{\omega}(s,\tilde{Y}(s))ds,\quad
t\leq0,
\end{equation}
and $Y(t,\omega,Y_0)=\tilde Y(t)$, where
$F^{\omega}(t,Y)=F(\theta_{t}\omega, Y)$ and $\xi=P\tilde{Y}(0)\in E_1$. For $H:(-\infty,0]\rightarrow E$
such that $\sup_{t\leq0}\|e^{\gamma t}H(t)\|_{E}<\infty$, define
\begin{equation*}\label{operator-L}
(LH)(t)=\int_{0}^{t}e^{C(t-s)}PH(s)ds+\int_{-\infty}^{t}e^{C(t-s)}QH(s)ds,
\quad t\leq0.
\end{equation*}
Then
\begin{equation*}
\sup_{t\leq0}\|e^{\gamma
t}(LH)(t)\|_{E}\leq(\frac{1}{\gamma}+\frac{1}{a-\gamma})\sup_{t\leq0}\|e^{\gamma
t}H(t)\|_E\leq\Bigg(\frac{1}{\gamma}+\frac{1}{a-2\gamma}\Bigg)\sup_{t\leq0}\|e^{\gamma
t}H(t)\|_E,
\end{equation*}
which means that $\|L\|\leq\frac{1}{\gamma}+\frac{1}{a-2\gamma}$.
Thus, Theorem 3.3 in \cite{CLL} shows that for any $\xi\in E_1$,
equation \eqref{centered-manifold-equation} has a unique solution
$\tilde{Y}^{\omega}(t,\xi)$ satisfying $\sup_{t\leq0}\|e^{\gamma
t}\tilde{Y}^{\omega}(t,\xi)\|_{E}<\infty$. Let
\begin{equation*}
h(\omega,\xi)=Q\tilde{Y}^{\omega}(0,\xi)=\int_{-\infty}^{0}e^{-Cs}QF^{\omega}(s,\tilde{Y}^{\omega}(s,\xi))ds,\quad
\omega\in\Omega.
\end{equation*}
Then, $$W(\omega)=\{\xi+h(\omega,\xi):\xi\in E_1\}$$  and $W(\omega)$ is a
one dimensional invariant manifold of \eqref{matrixRDE}$_{\omega}$.
Furthermore, for any $\epsilon\in(0,\gamma)$, by Lemma
\ref{tempered-random-variable}, we have
\begin{equation}\label{a-inequality}
\|h(\theta_{-t}\omega,\xi)\|_E\leq\frac{a_1}{a-\epsilon}\tilde{r}(\omega)e^{\epsilon
t}+\frac{a_2}{a},\quad t\geq0,
\end{equation}
where $a_1, a_2$ is the same as in the proof Theorem
\ref{existence-random-attractor2}.

To show the attracting property of $W(\omega)$,   we prove for each given $\omega\in\Omega$ the existence of a stable
foliation $\{W_s(Y_0,\omega):Y_0\in W(\omega)\}$ of the invariant
 manifold $W(\omega)$ of
\eqref{matrixRDE}$_{\omega}$. Consider the following integral
equation
\begin{equation}
\begin{split}
\hat{Y}(t)&=e^{Ct}\eta+\int_{0}^{t}e^{C(t-s)}Q\Big(F^{\omega}(s,\hat{Y}(s)+Y^{\omega}(s,\xi+h(\omega,\xi)))\\
&\quad\quad\quad\quad\quad\quad\quad\quad\quad-F^{\omega}(s,Y^{\omega}(s,\xi+h(\omega,\xi)))\Big)ds\\
&\quad+\int_{\infty}^{t}e^{C(t-s)}P\Big(F^{\omega}(s,\hat{Y}(s)+Y^{\omega}(s,\xi+h(\omega,\xi)))\\
&\quad\quad\quad\quad\quad\quad\quad-F^{\omega}(s,Y^{\omega}(s,\xi+h(\omega,\xi)))\Big)ds,\quad
t\geq0,
\end{split}\label{foliation-equation}
\end{equation}
where $\xi+h(\omega,\xi)\in W(\omega)$, $\eta=Q\hat{Y}(0)\in
E_2$ and
$Y^{\omega}(t,\xi+h(\omega,\xi)):=Y(t,\omega,\xi+h(\omega,\xi))$,
$t\geq0$ is the solution of \eqref{matrixRDE} with initial data
$\xi+h(\omega,\xi)$ for fixed $\omega\in\Omega$. Theorem 3.4 in
\cite{CLL} shows that for any $\xi\in E_1$ and $\eta\in E_2$,
equation \eqref{foliation-equation} has a unique solution
$\hat{Y}^{\omega}(t,\xi,\eta)$ satisfying $\sup_{t\geq0}\|e^{\gamma
t}\hat{Y}^{\omega}(t,\xi,\eta)\|_{E}<\infty$ and for any $\xi\in
E_1$, $\eta_1,\,\eta_2\in E_2$,
\begin{equation}\label{contraction-property}
\sup_{t\geq0}e^{\gamma
t}\|\hat{Y}^{\omega}(t,\xi,\eta_1)-\hat{Y}^{\omega}(t,\xi,\eta_2)\|_{E}\leq
M_2\|\eta_1-\eta_2\|_{E},
\end{equation}
where
$M_2=\frac{1}{1-\frac{2c_2|\beta|}{\alpha}\big(\frac{1}{\gamma}+\frac{1}{a-2\gamma}\big)}$.
Let
\begin{equation*}
\begin{split}
\hat{h}(\omega,\xi,\eta)&=\xi+P\hat{Y}^{\omega}(0,\xi,\eta)\\
&=\xi+\int_{\infty}^{0}e^{-Cs}P\Big(F^{\omega}(s,\hat{Y}^{\omega}(s,\xi,\eta)+Y^{\omega}(s,\xi+h(\omega,\xi)))\\
&\quad\quad\quad\quad\quad\quad\quad-F^{\omega}(s,Y^{\omega}(s,\xi+h(\omega,\xi)))\Big)ds.
\end{split}
\end{equation*}
Then,
$W_{s}(\omega,\xi+h(\omega,\xi))=\{\eta+h(\omega,\xi)+\hat{h}(\omega,\xi,\eta):\eta\in
E_2\}$ is a foliation of $W(\omega)$ at
$\xi+h(\omega,\xi)$.

Observe that
\begin{equation}
\begin{split}
&\hat{Y}^{\omega}(t,\xi,\eta)+Y^{\omega}(t,\xi+h(\omega,\xi))-Y^{\omega}(t,\xi+h(\omega,\xi))\\
&\quad\quad=\hat{Y}^{\omega}(t,\xi,\eta)\\
&\quad\quad=e^{Ct}(\eta+h(\omega,\xi)+\hat{h}(\omega,\xi,\eta)-\xi-h(\omega,\xi))\\
&\quad\quad\quad+\int_{0}^{t}e^{C(t-s)}\Big(F^{\omega}(s,\hat{Y}^{\omega}(s,\xi,\eta)+Y^{\omega}(s,\xi+h(\omega,\xi)))\\
&\quad\quad\quad\quad\quad\quad\quad\quad\quad-F^{\omega}(s,Y^{\omega}(s,\xi+h(\omega,\xi)))\Big)ds
\end{split}\label{compare-equation1}
\end{equation}
and
\begin{equation}
\begin{split}
&Y^{\omega}(t,\eta+h(\omega,\xi)+\hat{h}(\omega,\xi,\eta))-Y^{\omega}(t,\xi+h(\omega,\xi))\\
&\quad\quad=e^{Ct}(\eta+h(\omega,\xi)+\hat{h}(\omega,\xi,\eta)-\xi-h(\omega,\xi))\\
&\quad\quad\quad+\int_{0}^{t}e^{C(t-s)}\Big(F^{\omega}(s,Y^{\omega}(s,\eta+h(\omega,\xi)+\hat{h}(\omega,\xi,\eta)))\\
&\quad\quad\quad\quad\quad\quad\quad\quad\quad-F^{\omega}(s,Y^{\omega}(s,\xi+h(\omega,\xi)))\Big)ds.
\end{split}\label{compare-equation2}
\end{equation}
Comparing \eqref{compare-equation1} with \eqref{compare-equation2},
we find that
\begin{equation}\label{result-from-comparison}
\hat{Y}^{\omega}(t,\xi,\eta)=Y^{\omega}(t,\eta+h(\omega,\xi)+\hat{h}(\omega,\xi,\eta))-Y^{\omega}(t,\xi+h(\omega,\xi)),\quad
t\geq0.
\end{equation}
In addition, if $\eta=0$, then by the uniqueness of solution of
\eqref{foliation-equation}, $\hat{Y}^{\omega}(t,\xi,0)\equiv0$ for
$t\geq0$, which together with \eqref{contraction-property} and
\eqref{result-from-comparison} shows that
\begin{equation}\label{another-contraction-property}
\sup_{t\geq0}e^{\gamma
t}\|Y^{\omega}(t,\eta+h(\omega,\xi)+\hat{h}(\omega,\xi,\eta))-Y^{\omega}(t,\xi+h(\omega,\xi))\|_{E}\leq
M_2\|\eta\|_E
\end{equation}
for any $\xi\in E_1$ and $\eta\in E_2$. Therefore,  $\{W_s(Y_0,\omega):Y_0\in W(\omega)\}$ is a stable foliation of the
invariant
 manifold $W(\omega)$ of
\eqref{matrixRDE}$_{\omega}$ and then $W(\omega)$ is a one-dimensional attracting invariant manifold of \eqref{matrixRDE}$_{\omega}$.

Next we show that $A_0(\omega)=W(\omega)$ and $A_0(\omega)$ is a random horizontal curve.
 Let $\omega\mapsto B(\omega)$ be any
pseudo-tempered random set in $E$. For any $t>0$ and $Y_0\in B(\theta_{-t}\omega)$, there is
$\xi(\theta_{-t}\omega,Y_0)\in E_1$ such that
\begin{equation*}
\begin{split}
Y_0\in
W_{s}(\theta_{-t}\omega,\xi(\theta_{-t}\omega,Y_0)+h(\theta_{-t}\omega,\xi(\theta_{-t}\omega,Y_0))).
\end{split}
\end{equation*}
Let
$\eta(\theta_{-t}\omega)=\sup_{Y_0\in B(\theta_{-t}\omega)}\|QY_0-h(\theta_{-t}\omega,\xi(\theta_{-t}\omega,Y_0))\|_E$.
Then by \eqref{a-inequality} and
\eqref{another-contraction-property},
\begin{equation*}
\begin{split}
&\|Y(t,\theta_{-t}\omega,Y_0)-Y(t,\theta_{-t}\omega,\xi(\theta_{-t}\omega,Y_0)+h(\theta_{-t}\omega,\xi(\theta_{-t}\omega,Y_0)))\|_{E}\\
&\quad\quad\leq M_2e^{-\gamma t}\eta(\theta_{-t}\omega)\\
&\quad\quad\rightarrow0\quad\text{as}\quad t\rightarrow\infty,
\end{split}
\end{equation*}
which implies that
for $\omega\in\Omega$,
\begin{equation*}
\begin{split}
d_{H}(Y(t,\theta_{-t}\omega,B(\theta_{-t}\omega)),W(\omega))\rightarrow0\quad\text{as}\quad
t\rightarrow\infty.
\end{split}
\end{equation*}
Therefore,
$$A_0(\omega)=W(\omega)\,\,\,{\rm  for}\,\,\, \omega\in\Omega.
$$
Moreover, for any random horizontal curve
$\omega\mapsto\ell(\omega)$ in $E$  contained in some
pseudo-tempered random set,
\begin{equation*}
\begin{split}
d_{H}(Y(t,\theta_{-t}\omega,\ell(\theta_{-t}\omega)),A_0(\omega))\rightarrow0\quad\text{as}\quad
t\rightarrow\infty
\end{split}
\end{equation*}
for every $\omega\in\Omega$,
which means that
$\lim_{t\rightarrow\infty}Y(t,\theta_{-t}\omega,\ell(\theta_{-t}\omega))\subset
A_0(\omega)$. Since $A_0(\omega)$ is one-dimensional, we have for
$\omega\in\Omega$,
\begin{equation*}
\begin{split}
A_0(\omega)=\lim_{t\rightarrow\infty}Y(t,\theta_{-t}\omega,\ell(\theta_{-t}\omega)).
\end{split}
\end{equation*}
It then follows from Lemma \ref{invariance-of-rand-hori-curv} that
$\omega\mapsto A_0(\omega)$ is a random horizontal curve.
\end{proof}

\begin{corollary}
\label{one-dimension-cor} Assume that $a>4L_F$ and there is
$\gamma\in(0,\frac{a}{2})$ such that \eqref{4c} holds. Then the
random attractor $\omega\mapsto A(\omega)$ of the random dynamical
system $\phi$ is a random horizontal curve.
\end{corollary}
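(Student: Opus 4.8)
The plan is to deduce the statement for $\phi$ directly from the corresponding statement for $Y$ (Theorem \ref{one-dimension-thm}) by exploiting the explicit relation between the two attractors. By Corollary \ref{existence-random-attractor-corollary} together with Remark \ref{existence-attractor-rk}(2), the attractors differ, for each fixed $\omega$, by a translation:
\[
A(\omega)=A_0(\omega)+\tilde z(\omega),\qquad \tilde z(\omega)=(0,z(\omega))^{\top}\in E,
\]
where this identity holds as $p_0$-periodic sets in $E$ since $A_0(\omega)$ is a horizontal curve and hence invariant under $+p_0$. Theorem \ref{one-dimension-thm} tells us that $A_0(\omega)=\{p+\Phi^{\omega}(p):p\in E_1\}$ for a family $\{\Phi^{\omega}\}_{\omega\in\Omega}$ of $1$-Lipschitz, $p_0$-periodic maps $\Phi^{\omega}:E_1\to E_2$ as in Definition \ref{random-horizontal-curve-def}. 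Thus the whole corollary reduces to the purely geometric assertion that translating a random horizontal curve by the measurable vector field $\omega\mapsto\tilde z(\omega)$ again yields a random horizontal curve.

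To carry this out I would first split the translation according to the orthogonal decomposition $E=E_1\oplus E_2$, writing $\tilde z(\omega)=P\tilde z(\omega)+Q\tilde z(\omega)$ with $P\tilde z(\omega)\in E_1$ and $Q\tilde z(\omega)\in E_2$. The key point is that $\tilde z(\omega)$ is in general not vertical (its $E_1$-component $P\tilde z(\omega)$ need not vanish), so one cannot merely add it to the graph values; instead one reparametrizes. Setting $q=p+P\tilde z(\omega)$ and
\[
\Psi^{\omega}(q):=\Phi^{\omega}\big(q-P\tilde z(\omega)\big)+Q\tilde z(\omega),\qquad q\in E_1,
\]
one obtains $A(\omega)=\{q+\Psi^{\omega}(q):q\in E_1\}$ (as $p$ ranges over $E_1$ so does $q$). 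I would then verify the two defining properties of Definition \ref{random-horizontal-curve-def}. The Lipschitz bound is preserved because translating the argument is an isometry of $E_1$, so $\|\Psi^{\omega}(q_1)-\Psi^{\omega}(q_2)\|_{E}=\|\Phi^{\omega}(q_1-P\tilde z(\omega))-\Phi^{\omega}(q_2-P\tilde z(\omega))\|_{E}\leq\|q_1-q_2\|_{E}$. The $p_0$-periodicity survives because $\Psi^{\omega}(q+p_0)=\Phi^{\omega}(q+p_0-P\tilde z(\omega))+Q\tilde z(\omega)=\Phi^{\omega}(q-P\tilde z(\omega))+Q\tilde z(\omega)=\Psi^{\omega}(q)$, using the periodicity of $\Phi^{\omega}$.

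Finally I would confirm that $\omega\mapsto A(\omega)$ is a random set: since $d(x,A(\omega))=d(x-\tilde z(\omega),A_0(\omega))$ for every $x\in E$, and $\omega\mapsto\tilde z(\omega)$ is measurable while $\omega\mapsto A_0(\omega)$ is a random set, joint measurability yields measurability of $\omega\mapsto d(x,A(\omega))$. I expect the only genuinely delicate point to be the bookkeeping of the horizontal component $P\tilde z(\omega)$: it is exactly this component that forces the reparametrization rather than a naive vertical shift, and one must check that the induced horizontal shift does not disturb the $p_0$-periodicity — which it does not, precisely because $\Phi^{\omega}$ is $p_0$-periodic. Everything else is an immediate consequence of Theorem \ref{one-dimension-thm}.
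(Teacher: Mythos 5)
Your proposal is correct and takes essentially the same approach as the paper: the paper's own proof of Corollary \ref{one-dimension-cor} is a one-line appeal to Corollary \ref{existence-random-attractor-corollary} (which gives $\mathbf{A}(\omega)=\mathbf{A_0}(\omega)+\tilde{z}(\omega)\,\,(mod\,p_0)$), Remark \ref{existence-attractor-rk}(2), and Theorem \ref{one-dimension-thm}. The reparametrization absorbing the horizontal component $P\tilde{z}(\omega)$, the verification that the Lipschitz and $p_0$-periodicity conditions survive the translation, and the measurability check that you spell out are precisely the details the paper leaves implicit.
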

\begin{proof}
It follows from Corollary
\ref{existence-random-attractor-corollary},  Remark
\ref{existence-attractor-rk}
 and Theorem \ref{one-dimension-thm}.
\end{proof}

\begin{remark}\label{one-dimension-rk}
At the beginning of this section, we assume that $a>4L_{F}$. Since
$a=\frac{\alpha}{2}-|\frac{\alpha}{2}-\frac{\delta K\lambda_1}{\alpha}|$
and $L_{F}=\frac{2c_2|\beta|}{\alpha}$, we can take $\alpha$, $K$
satisfying $\frac{\alpha}{2}-\Big|\frac{\alpha}{2}-\frac{\delta
K\lambda_1}{\alpha}\Big|>\frac{8c_2|\beta|}{\alpha}$, where
$\lambda_1$ is the smallest positive eigenvalue of $A$. On the other
hand, we need some $\gamma\in(0,\frac{a}{2})$ such that \eqref{4c}
holds. Note that
\begin{equation*}
\begin{split}
\min_{\gamma\in(0,\frac{a}{2})}\Bigg(\frac{1}{\gamma}+\frac{1}{a-2\gamma}\Bigg)=\Bigg(\frac{1}{\gamma}
+\frac{1}{a-2\gamma}\Bigg)\Bigg|_{\gamma=\frac{a}{2+\sqrt{2}}}=\frac{(\sqrt{2}+1)^2}{a},
\end{split}
\end{equation*}
which implies that there exist $\alpha$, $K$ satisfying
\begin{equation}\label{4n}
\frac{\alpha}{2}-\Big|\frac{\alpha}{2}-\frac{\delta
K\lambda_1}{\alpha}\Big|>\frac{2c_2|\beta|(\sqrt{2}+1)^2}{\alpha}>\frac{8c_2|\beta|}{\alpha}.
\end{equation}
Indeed, let $c=2c_2|\beta|(\sqrt{2}+1)^2$, then for any
$\alpha>\sqrt{2c}$ and $K>\frac{c}{\lambda_1}$, there is a
$\delta>0$ satisfying
\begin{equation*}
\begin{split}
\frac{c}{K\lambda_1}<\delta<\min\Big\{\frac{\alpha^2-c}{K\lambda_1},1\Big\}
\end{split}
\end{equation*}
such that \eqref{4n} holds.
\end{remark}

\section{Rotation Number}
In this section, we study the phenomenon of frequency locking, i.e.,
the existence of a rotation number of the coupled second order
oscillators with white noises \eqref{main-eq}.

\begin{definition}\label{definition-rotation-number}
The coupled second order system with white noises \eqref{main-eq} is
said to have a {\rm rotation number $\rho\in\mathbb{R}$} if, for
$\PP$-a.e. $\omega\in\Omega$ and each $\phi_0=(u_0,u_1)^{\top}\in
E$, the limit
$\lim_{t\rightarrow\infty}\frac{P\phi(t,\omega,\phi_0)}{t}$ exists
and
\[
\begin{split}
\lim\limits_{t\rightarrow\infty}\frac{P\phi(t,\omega,\phi_0)}{t}=\rho\eta_0,
\end{split}
\]
where $\eta_0$ is the basis of $E_{1}$.
\end{definition}

Note that the rotation number is considered here by restricting
$\phi$ on $E_1$, since $\phi$ is dissipative on $E_2$ and limits
likewise in Definition \ref{definition-rotation-number} vanish. From
\eqref{SDE-RDS}, we have
\begin{equation}
\begin{split}
\frac{P\phi(t,\omega,\phi_0)}{t}=\frac{PY(t,\omega,Y_0(\omega))}{t}+\frac{P(0,z(\theta_t\omega))^{\top}}{t},
\end{split}\label{an-equality}
\end{equation}
where $\phi_0=(u_0,u_1)^{\top}$ and
$Y_0(\omega)=(u_0,u_1-z(\omega))^{\top}$. By Lemma 2.1 in
\cite{DLS}, it is easy to prove that
$\lim_{t\rightarrow\infty}\frac{P(0,z(\theta_t\omega))^{\top}}{t}=(0,0)^{\top}$.
Thus, it sufficient to prove the existence of the rotation number of
the random system \eqref{matrixRDE}.

Let us show a simple lemma which will be used. For any
$p_i=s_i\eta_0\in E_1$, $i=1,2$, we define
\begin{equation*}
p_1\leq p_2\quad\text{if}\quad s_1\leq s_2.
\end{equation*}
Then we have
\begin{lemma}\label{orientation-preserving}
Suppose that $a>4L_F$. Let $\ell$ be any deterministic
$np_0$-periodic horizontal curve ($\ell$ satisfies the Lipschitz and
periodic condition in Definition \ref{random-horizontal-curve-def}).
For any $Y_1,\,\,Y_2\in\ell$ with $PY_1\leq PY_2$, there holds
\begin{equation}\label{orientation inequality}
PY(t,\omega,Y_1)\leq PY(t,\omega,Y_2)\quad\text{for}\,\,
t>0,\,\,\omega\in\Omega.
\end{equation}
\end{lemma}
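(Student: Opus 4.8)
The plan is to exploit two structural facts already in hand — that the flow carries horizontal curves to horizontal curves, and that the time-$t$ map is injective — in order to reduce the claim to an elementary intermediate-value argument on the one-dimensional projection $P$. Throughout I would identify $E_1=\mathrm{span}\{\eta_0\}$ with $\mathbb{R}$ via $s\eta_0\leftrightarrow s$, so that the order $\leq$ on $E_1$ introduced before the lemma is the usual order on $\mathbb{R}$ and each $t\mapsto PY(t,\omega,Y_i)$ is a continuous scalar function. First I would dispose of the degenerate case $PY_1=PY_2$: since $\ell$ is a horizontal curve it is the graph of a single-valued map $E_1\to E_2$, so two of its points with the same $E_1$-projection must coincide, whence $Y_1=Y_2$ and there is nothing to prove. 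So I assume $PY_1<PY_2$.

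The two key inputs are as follows. Because $a>4L_F$, Lemma \ref{invariance-of-rand-hori-curv} applies to the $\omega$-independent curve $\ell$ (viewed as a constant random curve, for which measurability is trivial), and yields that for each fixed $\omega\in\Omega$ and each $t>0$ the image $Y(t,\omega,\ell)$ is again an $np_0$-periodic horizontal curve; in particular it is a graph over $E_1$, so at most one of its points projects under $P$ to any prescribed element of $E_1$. Second, since $F^{\omega}(\cdot,\cdot)$ is globally Lipschitz in $Y$ (Section 3), the vector field of \eqref{matrixRDE}$_{\omega}$ generates a flow whose time-$t$ map $Y_0\mapsto Y(t,\omega,Y_0)$ is a homeomorphism of $E$; in particular it is injective, i.e. distinct initial data remain distinct for all time.

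With these in hand I would argue by contradiction. Suppose \eqref{orientation inequality} fails, so that there exist $\omega\in\Omega$ and $t^*>0$ with $PY(t^*,\omega,Y_1)>PY(t^*,\omega,Y_2)$. Writing $\Delta(t)=PY(t,\omega,Y_2)-PY(t,\omega,Y_1)$ as a scalar, we have $\Delta(0)>0$ and $\Delta(t^*)<0$, so by continuity and the intermediate value theorem there is $t_0\in(0,t^*)$ with $PY(t_0,\omega,Y_1)=PY(t_0,\omega,Y_2)$. But both $Y(t_0,\omega,Y_1)$ and $Y(t_0,\omega,Y_2)$ lie on the horizontal curve $Y(t_0,\omega,\ell)$, which is a graph over $E_1$; having equal $P$-projection they must be the same point, $Y(t_0,\omega,Y_1)=Y(t_0,\omega,Y_2)$. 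Injectivity of the time-$t_0$ map then forces $Y_1=Y_2$, contradicting $PY_1<PY_2$. This contradiction establishes \eqref{orientation inequality}.

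The hard part is the structural step, namely that $Y(t,\omega,\ell)$ stays a graph over $E_1$ for all $t>0$: this is precisely the invariance of horizontal curves, and it is where the hypothesis $a>4L_F$ enters, through Lemma \ref{invariance-of-rand-hori-curv}. Everything else — continuity of the scalar projection, the intermediate value theorem, and injectivity of the globally Lipschitz flow — is routine; the $np_0$-periodicity together with the $p_0$-translation invariance of Lemma \ref{translation-invariant} is needed only to guarantee that $\ell$ and its images are genuine graphs over the entire line $E_1$.
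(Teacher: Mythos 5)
Your proof is correct and is essentially the argument the paper intends: the paper omits the proof, deferring to Lemma 6.3 of \cite{SZS}, whose argument is exactly yours --- continuity and the intermediate value theorem produce a time $t_0$ where the $E_1$-projections coincide, the invariance of horizontal curves (Lemma \ref{invariance-of-rand-hori-curv}, valid since $a>4L_F$) upgrades equal projections to equal points on the graph $Y(t_0,\omega,\ell)$, and uniqueness of solutions of \eqref{matrixRDE}$_{\omega}$ yields the contradiction. The only cosmetic difference is that you close the contradiction via injectivity of the time-$t_0$ map (backward uniqueness) to force $Y_1=Y_2$, whereas forward uniqueness from $t_0$ onward already contradicts the strict inequality at $t^{*}>t_0$; for the globally Lipschitz ODE both are available, so this is immaterial.
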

The proof of this lemma is similar to that of Lemma 6.3 in \cite{SZS}. We
then omit it here. We now have the main result in this section.
\begin{theorem}\label{existence-rotation-number1}
Let $a>4L_F$. Then the rotation number of \eqref{matrixRDE} exists.
\end{theorem}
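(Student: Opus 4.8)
The plan is to reduce the existence of the rotation number to a statement about a scalar, monotone, $\kappa$-translation-invariant cocycle on the line, to which a subadditive ergodic argument applies, and then to transfer the conclusion from the attractor to all initial data through the stable foliation.

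First I would exploit the structure of the attractor. By Theorem \ref{one-dimension-thm}, $\omega\mapsto A_0(\omega)$ is a random horizontal curve, so there is a family $\{\Phi^{\omega}\}$ of maps $E_1\to E_2$ with $A_0(\omega)=\{p+\Phi^{\omega}(p):p\in E_1\}$. Since $A_0(\omega)$ is a graph over $E_1$, each of its points is labelled by its $E_1$-coordinate; identifying $E_1$ with $\RR$ via $s\mapsto s\eta_0$, I define $\psi(t,\omega,s)\in\RR$ by $PY(t,\omega,\,s\eta_0+\Phi^{\omega}(s\eta_0))=\psi(t,\omega,s)\,\eta_0$. Invariance of the attractor guarantees that $Y(t,\omega,\cdot)$ carries $A_0(\omega)$ into $A_0(\theta_t\omega)$, so $\psi$ is well defined, and the cocycle identity for $Y$ gives $\psi(t+\tau,\omega,s)=\psi(t,\theta_\tau\omega,\psi(\tau,\omega,s))$. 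Two further properties are immediate: monotonicity $s_1\le s_2\Rightarrow\psi(t,\omega,s_1)\le\psi(t,\omega,s_2)$ from Lemma \ref{orientation-preserving} applied to the deterministic curve $A_0(\omega)$, and the translation relation $\psi(t,\omega,s+\kappa)=\psi(t,\omega,s)+\kappa$ from the $p_0$-translation invariance of $Y$ (Lemma \ref{translation-invariant}), since $p_0=\kappa\eta_0$.

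Next I would extract an almost-additive structure. Writing $\psi(\tau,\omega,0)=k\kappa+r$ with $k\in\ZZ$ and $0\le r<\kappa$, monotonicity together with the translation relation gives $\psi(t,\theta_\tau\omega,0)+k\kappa\le\psi(t,\theta_\tau\omega,\psi(\tau,\omega,0))\le\psi(t,\theta_\tau\omega,0)+(k+1)\kappa$, whence from the cocycle identity
\[
\big|\psi(t+\tau,\omega,0)-\psi(\tau,\omega,0)-\psi(t,\theta_\tau\omega,0)\big|\le\kappa .
\]
Thus $\omega\mapsto\psi(t,\omega,0)+\kappa$ is subadditive over $(\Omega,\mathcal F,\PP,(\theta_t)_{t\in\RR})$. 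To invoke Kingman's subadditive ergodic theorem I need integrability, which I would obtain from an a priori bound: projecting \eqref{matrixRDE} onto $E_1$ and using $PC=0$ (valid since $C\eta_0=0$ and $E_2$ is $C$-invariant, cf. Lemma \ref{lemma-3}(3)) gives $\frac{d}{dt}PY=PF(\theta_t\omega,Y)$; by \textbf{(HG)} and the boundedness of $f$ one has $\|PF(\theta_t\omega,Y)\|\le C_0(1+\|z(\theta_t\omega)\|)$, and integrating, together with $\|z(\cdot)\|\in L^1(\Omega,\PP)$ (since $z$ is the stationary Ornstein--Uhlenbeck process), bounds $\mathbb E\sup_{0\le t\le1}|\psi(t,\cdot,0)|$. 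Kingman's theorem then yields $\lim_{t\to\infty}\psi(t,\omega,0)/t=\rho$ for $\PP$-a.e.\ $\omega$, with $\rho$ deterministic by ergodicity of $(\theta_t)$; the passage from integer to real times is routine given the same $L^1$ bound, and monotonicity with the translation relation sandwiches $\psi(t,\omega,0)+k\kappa\le\psi(t,\omega,s)\le\psi(t,\omega,0)+(k+1)\kappa$, upgrading the limit to $\lim_{t\to\infty}\psi(t,\omega,s)/t=\rho$ independently of $s$.

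Finally I would remove the restriction to the attractor. For arbitrary $Y_0\in E$, the stable foliation estimate \eqref{another-contraction-property} furnishes a point $\xi+h(\omega,\xi)\in A_0(\omega)$ with $\|Y(t,\omega,Y_0)-Y(t,\omega,\xi+h(\omega,\xi))\|_E\to0$, so $PY(t,\omega,Y_0)/t$ shares the limit $\rho\eta_0$. Returning to $\phi$ through \eqref{an-equality} and the already-noted fact $P(0,z(\theta_t\omega))^{\top}/t\to0$ completes the identification of $\rho$ as the rotation number. I expect the main obstacle to be establishing that the subadditive structure holds with a defect genuinely bounded by $\kappa$ and that the integrability hypotheses of Kingman's theorem are met in the equivalent norm $\|\cdot\|_E$; once these are secured, ergodicity produces a deterministic $\rho$ and the foliation transfers it to every initial condition.
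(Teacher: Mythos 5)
Your strategy is genuinely different from the paper's: you build a scalar, monotone, $\kappa$-periodic lift cocycle $\psi$ on the attractor and apply Kingman's subadditive ergodic theorem, then transfer to arbitrary initial data through the stable foliation. The paper instead applies Birkhoff's ergodic theorem to the bounded observable $\mathbf{F}=P\circ F$ along an ergodic invariant measure $\mu$ for the skew-product flow $\mathbf{\Theta}_t$ on $\Omega\times\mathbf{E}$ (existence quoted from Crauel), obtaining the limit $\rho\eta_0$ at a single $\mu$-generic point $Y_0^*(\omega)$ for $\PP$-a.e.\ $\omega$, and then reaches arbitrary $Y\in E$ by sandwiching it between the translates $Y_0^*(\omega)\pm n_0p_0$ on an auxiliary periodic horizontal curve and invoking Lemma \ref{orientation-preserving}. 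Unfortunately, your route has a genuine gap relative to the theorem as stated: both of your key ingredients are available only under hypotheses strictly stronger than $a>4L_F$.

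Concretely, the graph representation $A_0(\omega)=\{p+\Phi^{\omega}(p):p\in E_1\}$ (Theorem \ref{one-dimension-thm}) and the stable-foliation estimate \eqref{another-contraction-property}, whose constant is $M_2=\bigl(1-\tfrac{2c_2|\beta|}{\alpha}(\tfrac{1}{\gamma}+\tfrac{1}{a-2\gamma})\bigr)^{-1}$, require, in addition to $a>4L_F$, the existence of $\gamma\in(0,\tfrac{a}{2})$ satisfying \eqref{4c}. By the computation in Remark \ref{one-dimension-rk}, $\min_{\gamma\in(0,a/2)}\bigl(\tfrac{1}{\gamma}+\tfrac{1}{a-2\gamma}\bigr)=\tfrac{(\sqrt{2}+1)^2}{a}$, so such a $\gamma$ exists if and only if $a>(\sqrt{2}+1)^2L_F\approx 5.83\,L_F$; the hypothesis $a>4L_F$ alone (say $a=5L_F$) does not provide it. In that range your cocycle $\psi$ is simply not defined (no graph structure for $A_0(\omega)$ is known) and there is no foliation with which to handle general initial data, so the argument does not cover the stated parameter range of Theorem \ref{existence-rotation-number1}. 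The paper's proof avoids the attractor's fine structure entirely: it needs only translation invariance (Lemma \ref{translation-invariant}) and the order preservation of Lemma \ref{orientation-preserving}, both valid under $a>4L_F$. In the smaller regime where \eqref{4c} does hold, your argument looks essentially sound --- the defect-$\kappa$ almost-additivity also gives almost-superadditivity, which together with your $L^1$ bound excludes the value $-\infty$ in Kingman's theorem --- and it has the merit of not requiring an invariant measure for the skew product; but as written it proves a weaker statement than the theorem.
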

\begin{proof}
By the random dynamical system $\mathbf{Y}$ defined in \eqref{induced-RDE-RDS},
we define the corresponding skew-product semiflow
$\mathbf{\Theta}_t:\Omega\times\mathbf{E}\rightarrow\Omega\times\mathbf{E}$
for $t\geq0$ by setting
\[
\begin{split}
\mathbf{\Theta}_t(\omega,\mathbf{Y_0})=(\theta_t\omega,\mathbf{Y}(t,\omega,\mathbf{Y_0})).
\end{split}
\]
Obviously,
$(\Omega\times\mathbf{E},\,\mathcal{F}\times\mathcal{B},\,(\mathbf{\Theta}_t)_{t\geq0})$
is a measurable dynamical system, where
$\mathcal{B}=\mathcal{B}(\mathbf{E})$ is the Borel $\sigma$-algebra
of $\mathbf{E}$. It also can be verified that there is a measure
$\mu$ on $\Omega\times\mathbf{E}$ such that
$(\Omega\times\mathbf{E},\,\mathcal{F}\times\mathcal{B},\,\mu,\,(\mathbf{\Theta}_t)_{t\geq0})$
becomes an ergodic metric dynamical system (see \cite{Cr}). Note
that
\[
\begin{split}
\frac{PY(t,\omega,Y_0)}{t}=\frac{PY_0}{t}+\frac{1}{t}\int_{0}^{t}PF(\theta_s\omega,Y(s,\omega,Y_0))ds.
\end{split}
\]
Since
$F(\theta_s\omega,Y(s,\omega,Y_0)+kp_0)=F(\theta_s\omega,Y(s,\omega,Y_0)),\,\,\forall
k\in\mathbb{Z}$, we can identify
$F(\theta_s\omega,\mathbf{Y}(s,\omega,\mathbf{Y_0}))$ with
$F(\theta_s\omega,Y(s,\omega,Y_0))$ and write
\[
\begin{split}
F(\theta_s\omega,Y(s,\omega,Y_0))=F(\theta_s\omega,\mathbf{Y}(s,\omega,\mathbf{Y_0})).
\end{split}
\]
Thus,
\begin{equation}
\begin{split}
\frac{PY(t,\omega,Y_0)}{t}&=\frac{PY_0}{t}+\frac{1}{t}\int_{0}^{t}PF(\theta_s\omega,\mathbf{Y}(s,\omega,\mathbf{Y_0}))ds\\
&=\frac{PY_0}{t}+\frac{1}{t}\int_{0}^{t}\mathbf{F}(\mathbf{\Theta}_s(\omega,\mathbf{Y_0}))ds,\\
\end{split}\label{5b}
\end{equation}
where $\mathbf{F}=P\circ F\in
L^{1}(\Omega\times\mathbf{E},\,\mathcal{F}\times\mathcal{B},\,\mu)$.
Let $t\rightarrow\infty$ in (\ref{5b}),
$\lim_{t\rightarrow\infty}\frac{PY_0}{t}=(0,0)^{\top}$ and by
Ergodic Theorems in \cite{A}, there exist a constant
$\rho\in\mathbb{R}$ such that
\[
\begin{split}
\lim_{t\rightarrow\infty}\frac{1}{t}\int_{0}^{t}\mathbf{F}(\mathbf{\Theta}_s(\omega,\mathbf{Y_0}))ds=\rho\eta_0,
\end{split}
\]
which means
\[
\begin{split}
\lim_{t\rightarrow\infty}\frac{PY(t,\omega,Y_0)}{t}=\rho\eta_0.
\end{split}
\]
for $\mu$-a.e.$(\omega,Y_0)\in\Omega\times E$. Thus, there is
$\Omega^*\subset\Omega$ with $\PP(\Omega^*)=1$ such that for any
$\omega\in\Omega^*$,
 there is $Y_0^{*}(\omega)\in E$ such that
\begin{equation*}
\lim_{t\rightarrow\infty}\frac{PY(t,\omega,Y_0^{*}(\omega))}{t}=\rho\eta_0.
\end{equation*}
By Lemma \ref{translation-invariant}, we have that for any
$n\in\mathbb{N}$ and $\omega\in\Omega^*$,
\begin{equation}\label{5c}
\lim_{t\rightarrow\infty}\frac{PY(t,\omega,Y^{*}_0(\omega)\pm
np_0)}{t}=\lim_{t\rightarrow\infty}\frac{PY(t,\omega,Y^{*}_0(\omega))\pm
np_0}{t}=\rho\eta_0.
\end{equation}

Now for any $\omega\in\Omega^*$ and any $Y\in E$, there is
$n_0(\omega)\in\mathbb{N}$ such that
\begin{equation*}
PY^{*}_0(\omega)-n_0(\omega)p_0\leq PY\leq
PY^{*}_0(\omega)+n_0(\omega)p_0
\end{equation*}
and there is a $n_0(\omega)p_0$-periodic horizontal curve $l_0(
\omega)$ such that $Y^*_0(\omega)-n_0(\omega)p_0$, $Y$,
$Y^*_0(\omega)+n_0(\omega)p_0\in l_0(\omega)$. Then by Lemma
\ref{orientation-preserving}, we have
\begin{equation*}
PY(t,\omega,Y^{*}_0(\omega)-n_0(\omega)p_0)\leq PY(t,\omega,Y)\leq
PY(t,\omega,Y^{*}_0(\omega)+n_0(\omega)p_0),
\end{equation*}
which together with \eqref{5c} implies that for  any
$\omega\in\Omega^*$ and any $Y\in E$,
\begin{equation*}
\lim_{t\rightarrow\infty}\frac{PY(t,\omega,Y)}{t}=\rho\eta_0.
\end{equation*}
Consequently, for any a.e. $\omega\in\Omega$ and any $Y\in E$,
\begin{equation*}
\lim_{t\rightarrow\infty}\frac{PY(t,\omega,Y)}{t}=\rho\eta_0.
\end{equation*}
The theorem is thus proved.
\end{proof}

\begin{corollary}\label{existence-rotation-number1-corollary}
Assume that $a>4L_F$. Then the rotation number of the coupled second
order system with white noises \eqref{main-eq} exists.
\end{corollary}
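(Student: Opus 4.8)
The plan is to deduce the corollary from Theorem \ref{existence-rotation-number1} by passing from the auxiliary random system \eqref{matrixRDE} back to the random dynamical system $\phi$ generated by \eqref{main-eq}, using the decomposition \eqref{an-equality}. Theorem \ref{existence-rotation-number1} already supplies, under the standing hypothesis $a>4L_F$, a constant $\rho\in\RR$ and a full-measure set $\Omega^{*}\subset\Omega$ such that
\[
\lim_{t\to\infty}\frac{PY(t,\omega,Y)}{t}=\rho\eta_0\qquad\text{for all }\omega\in\Omega^{*}\text{ and all }Y\in E.
\]
Since the conclusion of the theorem holds simultaneously for every initial point $Y\in E$, I may evaluate it at the $\omega$-dependent datum $Y_0(\omega)=(u_0,u_1-z(\omega))^{\top}$ without difficulty.

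First I would recall the identity \eqref{an-equality}: for $\phi_0=(u_0,u_1)^{\top}$ with corresponding $Y_0(\omega)=(u_0,u_1-z(\omega))^{\top}$ one has
\[
\frac{P\phi(t,\omega,\phi_0)}{t}=\frac{PY(t,\omega,Y_0(\omega))}{t}+\frac{P(0,z(\theta_t\omega))^{\top}}{t}.
\]
The first term on the right converges to $\rho\eta_0$ by the displayed consequence of Theorem \ref{existence-rotation-number1}. For the second term I would invoke the sublinear growth of the Ornstein--Uhlenbeck process: as observed immediately after Definition \ref{definition-rotation-number}, Lemma 2.1 of \cite{DLS} yields a full-measure set $\Omega_{\mathrm{OU}}$ on which $\lim_{t\to\infty}\frac{P(0,z(\theta_t\omega))^{\top}}{t}=(0,0)^{\top}$. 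Adding the two limits on the full-measure set $\Omega^{*}\cap\Omega_{\mathrm{OU}}$ gives
\[
\lim_{t\to\infty}\frac{P\phi(t,\omega,\phi_0)}{t}=\rho\eta_0,
\]
which is exactly the property required in Definition \ref{definition-rotation-number}; hence \eqref{main-eq} has rotation number $\rho$.

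I do not expect any genuine obstacle here, as the corollary is essentially a bookkeeping translation of the theorem through \eqref{an-equality}. The only point deserving care is the almost-everywhere bookkeeping: one must verify that the single full-measure set $\Omega^{*}\cap\Omega_{\mathrm{OU}}$ works uniformly over all choices of $\phi_0\in E$. This holds because $\Omega^{*}$ is produced in the proof of Theorem \ref{existence-rotation-number1} independently of the initial point $Y$, while $\Omega_{\mathrm{OU}}$ is independent of $\phi_0$ altogether. With this intersection fixed, the convergence holds for every $\phi_0\in E$ and $\PP$-a.e.\ $\omega\in\Omega$, completing the argument.
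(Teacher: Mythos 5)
Your proof is correct and follows exactly the paper's route: the paper's own (one-line) proof cites \eqref{an-equality} together with Theorem \ref{existence-rotation-number1}, relying on the observation made right after Definition \ref{definition-rotation-number} that $\lim_{t\to\infty}\frac{P(0,z(\theta_t\omega))^{\top}}{t}=(0,0)^{\top}$ by Lemma 2.1 of \cite{DLS}. Your write-up simply makes explicit the same decomposition and the almost-sure bookkeeping that the paper leaves implicit.
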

\begin{proof}
It follows from \eqref{an-equality} and Theorem
\ref{existence-rotation-number1}.
\end{proof}

\end{document}